\newcommand{\diver}{\mathop{\rm div}\nolimits}
\newcommand{\p}{\partial}
\newcommand{\vk}{\varkappa}
\newcommand{\vp}{\varphi}
\newcommand{\cI}{\mathcal I}
\newcommand{\STR}{\mathbb S^3}
\newcommand{\z}{\frak z}
\newcommand{\R}{{\mathbb R}}
\newcommand{\C}{{\mathbb C}}
\newcommand{\Z}{{\mathbb Z}}
\newcommand{\T}{{\mathbb T}}
\newcommand{\es}{{\mathbb S}}
\newcommand{\cP}{{\mathcal P}}
\newcommand{\const}{\mathop{\rm const}\nolimits}
\newcommand{\rot}{\mathop{\rm rot}\nolimits}
\newcommand{\SVect}{\text{\upshape SVect}}
\def\12{\tfrac12}
\def\lan{\langle}
\def\ran{\rangle}
\def\vp{\varphi}
\newcommand{\om}{\omega}
\newcommand{\intt}{\frac1{2\pi}\int_0^{2\pi}}
\newcommand{\al}{\alpha}
\newtheorem{theorem}{Theorem}[section]
\newtheorem{proposition}[theorem]{Proposition}
\newtheorem{corollary}[theorem]{Corollary}
\newtheorem{lemma}[theorem]{Lemma}
\theoremstyle{definition}
 \newtheorem{definition}[theorem]{Definition}
\newtheorem{example}[theorem]{Example}
\newtheorem*{definition*}{Definition}
\theoremstyle{remark}
\newtheorem{remark}[theorem]{Remark}
\numberwithin{equation}{section}
\newcounter{bk}
\newcounter{sk}
\newcounter{dps}
\begin{document}

\title[Global, local and dense non-mixing of the 3D Euler equation]{Global, local and dense non-mixing \\ of  the 3D Euler equation }

\author{Boris Khesin}
\address{Department of Mathematics, University of Toronto, Toronto, ON M5S 2E4, Canada}
\email{khesin@math.toronto.edu}

\author{Sergei Kuksin}
\address{Universit\'e Paris-Diderot (Paris 7), UFR de Math\'ematiques - Batiment Sophie Germain, 5 rue Thomas Mann, 75205 Paris CEDEX 13, France  \& School of Mathematics, Shandong University, Jinan, PRC \& Saint Petersburg State University, Universitetskaya nab., St. Petersburg, Russia}
\email{sergei.kuksin@imj-prg.fr}

\author{Daniel Peralta-Salas}
\address{Instituto de Ciencias Matem\'aticas, Consejo Superior de
 Investigaciones Cient\'\i ficas, 28049 Madrid, Spain}
\email{dperalta@icmat.es}

\maketitle

\begin{abstract}
 We prove  a non-mixing property of the flow of the 3D Euler equation which has a local nature:
 in any neighbourhood of a ``typical'' steady solution there is a generic set of initial conditions, such that the corresponding Euler flows will never enter a vicinity of the original steady one.
 More precisely, we establish that
 there exist stationary solutions $u_0$ of the Euler equation on $\mathbb S^3$ and divergence-free vector fields $v_0$ arbitrarily close to $u_0$, whose (non-steady) evolution by the Euler flow cannot converge in the $C^k$ H\"older norm ($k>10$ non-integer) to any stationary state in a small (but fixed a priori) $C^k$-neighbourhood of $u_0$. The set of such initial conditions $v_0$ is open and dense in the vicinity of $u_0$.
 A similar (but weaker) statement also holds for the Euler flow on $\mathbb T^3$.
 Two essential ingredients in the proof of this result are a geometric description of all steady states near certain nondegenerate stationary solutions, and a KAM-type argument to generate knotted invariant tori from elliptic orbits.
\end{abstract}

\section{Introduction}\label{s0}

The dynamics of an ideal incompressible fluid on a Riemannian manifold is described by the Euler equation. It is an infinite-dimensional Hamiltonian system and has many peculiar properties, impossible in their finite-dimensional counterparts.
For instance, in 2D the Euler equation possesses wandering solutions, which never return to the vicinity of the initial condition~\cite{Na91, Sh97}, while the Poincar\'e recurrence theorem would guarantee the return in finite-dimensional systems with  convex Hamiltonians.
In 3D the Euler equation has a global non-mixing property: there are two open sets of fluid velocity fields such that the solutions with initial conditions from one of these sets will never enter the other set~\cite{KKP14}. In the present paper we prove that the non-mixing property has a local (and dense) nature: such two sets can be found in any neighbourhood of a ``typical" steady solution, as we explain below. Thus this does not only establish ubiquitous appearance of non-mixing in the phase space, but this can also be thought of as a step towards the wandering property of Euler solutions in 3D, with the existence of
solutions non-returning to a nearby neighbourhood of the initial conditions, rather than its own.

Recall that the dynamics of an ideal fluid flow on a Riemannian 3-manifold $M$ is described by the Euler equation
on the fluid velocity field $u(\cdot,t)$:
\begin{equation} \label{eq:euler}
\partial_tu+ \nabla_u u=-\nabla p\,, \;\; \diver u=0\,.
\end{equation}
Here $\nabla_u u$ is the covariant derivative of the vector field $u$ along itself,  $\diver$ is the divergence operator computed with the Riemannian volume form, and $p(\cdot,t)$ is the pressure function, uniquely defined by the equations up to a constant.

For a closed manifold $M$ (i.e., compact and without boundary), the Euler equation defines a local flow $\{\mathcal S_t\}$
of  homeomorphisms of the H\"older space of $C^k$ divergence-free vector fields on $M$ provided that $k>1$ is not an integer~\cite{EM70}. Accordingly, for any $u_0$ in this space, the solution
$$
u(t,\cdot)=\\\mathcal S_t(u_0)
$$
with the initial condition $\mathcal S_0(u_0)=u_0$
is defined for $-t_*(u_0)<t<t^*(u_0)$ and is $C^1$-smooth in $t$. This property fails for integer $k$~\cite{Bo}.

The Kelvin circulation law is the following remarkable property of the Euler equation:
the \emph{vorticity field} $\omega:=\rot u$
is transported by the fluid flow (see Appendix~\ref{app1}), i.e.
\begin{equation}\label{eqtrans}
\partial_t\omega=[\omega,u]:=\nabla_{\omega}u-\nabla_u\omega\,,
\end{equation}
and hence the vortex lines at $t=0$ are diffeomorphic to the corresponding
vortex lines at any other $t$ for which the solution exists.

Inspired by the phenomenon of the vorticity transport, we introduced in~\cite{KKP14} an integral of motion
for the 3D Euler equation that is independent of the energy and  helicity. This conserved quantity is a functional $\vk$ on the space of divergence-free vector fields which measures the fraction of $M$  covered by ergodic invariant tori of $\rot u$. Moreover,
the way the invariant tori are embedded (knotted) in $M$ is also an invariant
and it gives a family  $\vk_a$, $a\in\Z$, of infinitely many conserved quantities.
(More precisely, the index $a$ belongs to the countable set of different embedding classes.)
The whole family of quantities $\{\vk_a\}$ for a given divergence-free vector field is called the integrability spectrum of this field on the manifold.

These integrability functionals are invariant under arbitrary volume-pre\-serving diffeomorphisms. On the other hand, they are not even continuous, so this does not contradict the fact that the helicity is essentially the only $C^1$ Casimir of the corresponding coadjoint action~\cite{EPT16}.
Nevertheless, thanks to the KAM theory, the functional $\vk_a$ has good continuity properties when computed for nearly-integrable nondegenerate vector fields, a property that was exploited in~\cite{KKP14} to analyse the evolution of the Euler equation.

To formulate the main theorem  of this paper we recall that a stationary (or steady) flow is a divergence-free
solution $u$ of the stationary Euler equation $\nabla_u u=-\nabla p$.
 In 3D there is a particularly useful equivalent way to write the equation for steady states:
  a divergence-free field $u$ is a steady Euler flow if there is a (Bernoulli) function $\mathcal B$ on $M$ such that $u \times \rot  u = \nabla  {\mathcal B}$, where the operations $\times$ and $\rot$ are taken with respect to the Riemannian metric on $M$; see  Appendix~\ref{app1}.  In particular, this implies that the vector fields $u$ and $\rot u$ commute on $M$. In the statement of the theorem we also use the notation
$$
B_\epsilon(w;k):=\{h\in \SVect^k(M):\|h-w\|_{k}<\epsilon\}
$$
for the $\epsilon$-neighbourhood in the $C^k$ H\"older norm of a divergence-free vector field $w$. Here, $\SVect^k(M)$ denotes the space of divergence-free $C^k$-smooth vector fields on $M$, endowed with the $C^k$ norm $\|\cdot\|_k$.
The following  main results of the paper demonstrate the local nature of the non-mixing property for the Euler equation.
(The class of nondegenerate shear  flows, evoked there, is defined below in Section~\ref{S.3}, see Definitions~\ref{D:TI} and~\ref{D:torus}.)


\begin{theorem}\label{T:main0}
Let $u_0$ be a stationary nondegenerate shear Euler flow on $\mathbb S^3$ or $\mathbb T^3$, and fix a non-integer $k>5$.
Then:
\begin{enumerate}
\item For any $\epsilon>0$ there exists a vector field $v_0\in B_\epsilon(u_0;k)$ and a positive constant $\delta=\delta(v_0)$ such that
$$
\|\mathcal S_t(v_0)-  u_0\|_k>\delta
$$
for  all $t$ for which the Euler flow $\mathcal S_t(v_0)$ is defined.
\item The set of such initial conditions $v_0$ is open and dense in $B_\epsilon(u_0;k)$, provided that $\epsilon$ is sufficiently small.
\end{enumerate}
\end{theorem}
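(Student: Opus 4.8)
The plan is to exploit the conserved quantity $\vk_a$ of~\cite{KKP14}. For a solution $u(t,\cdot)=\mathcal S_t(v_0)$ the vorticity $\rot u$ is transported by the volume-preserving flow of $u(t,\cdot)$ (the Kelvin law~\eqref{eqtrans}), so its ergodic invariant $2$-tori are carried to ergodic invariant tori of the same volume and isotopy class; hence $t\mapsto\vk_a(\mathcal S_t(v_0))$ is constant for every embedding class $a$. I would produce, arbitrarily close to $u_0$, a field $v_0$ whose vorticity carries a positive-volume set of \emph{knotted} invariant tori of a class $a$ incompatible with closeness to $u_0$, and then quantify that incompatibility through KAM theory.

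\textbf{Step 1: KAM rigidity of the shear flow.} Since $u_0$ is a nondegenerate shear flow, $\rot u_0$ is an integrable, KAM-nondegenerate field whose invariant tori are the horizontal tori foliating $M$, each of them \emph{unknotted} (a Heegaard torus of $\STR$, resp.\ a linear torus of $\T^3$). Because $\|w-u_0\|_k<\eta$ forces $\|\rot w-\rot u_0\|_{k-1}\le C\eta$ and $k-1>4$, finite-regularity KAM theory should furnish $\delta_*>0$ and a function $g$ with $g(\eta)\to0$ as $\eta\to0^+$ such that every divergence-free $w$ with $\|w-u_0\|_k<\delta_*$ has a family $K(w)$ of Diophantine KAM tori of $\rot w$, each a $C^0$-small graph over a horizontal torus (hence unknotted), with $\mathrm{vol}(M\setminus K(w))\le g(\|w-u_0\|_k)$. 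Moreover, if an invariant torus $S$ of $\rot w$ met a torus $T\in K(w)$, the $\rot w$-orbit of a point of $S\cap T$ would be dense in $T$ and lie in the closed set $S$, forcing $T\subset S$, hence $S=T$; so every knotted invariant torus of $\rot w$ lies in $M\setminus K(w)$ and
\[
\vk_a(w)\le g(\|w-u_0\|_k)\qquad\text{for every knotted class }a.
\]

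\textbf{Step 2: knotted tori near $u_0$; conclusion of (1).} I would fix a resonant horizontal torus $T_0$ of $\rot u_0$ carrying rational rotation number $p/q$ with $p,q\ge2$ coprime, so that on a transversal the $q$-th return map of $\rot u_0$ is a degenerate twist map with a circle of fixed points. A Poincar\'e--Birkhoff-type perturbation, realised as the curl of a divergence-free, $C^{k-1}$-small field supported near $T_0$ (admissible because on $\STR$, resp.\ $\T^3$, curl is onto the divergence-free, resp.\ mean-zero divergence-free, fields, with a right inverse gaining one derivative), breaks this circle into finitely many alternating elliptic and hyperbolic periodic orbits of the perturbed field. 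For a generic such perturbation one elliptic orbit $\gamma$ is a nondegenerate $(p,q)$ torus knot whose return map satisfies the Birkhoff--Moser twist condition, whence KAM yields a positive-volume family of invariant $2$-tori accumulating on $\gamma$, all isotopic to the boundary of a tubular neighbourhood of $\gamma$ --- knotted tori of a class $a$ absent among the horizontal tori of $u_0$. Making the perturbation small I get $v_0\in B_\epsilon(u_0;k)$ with $\mu_0:=\vk_a(v_0)>0$. Then, choosing $\delta>0$ with $g(\delta)<\mu_0$: should $\|\mathcal S_t(v_0)-u_0\|_k<\delta$ hold for some $t$ in the existence interval, Step~1 would give $\vk_a(\mathcal S_t(v_0))\le g(\delta)<\mu_0=\vk_a(v_0)$, against conservation of $\vk_a$; this is (1). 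Bringing in the geometric classification of all steady states near a nondegenerate shear flow (Section~\ref{S.3}), which presents them as KAM-nondegenerate integrable fields with unknotted tori so that $\vk_a$ vanishes and is continuous at each of them, would even preclude convergence of $\mathcal S_t(v_0)$ to any steady state in a fixed neighbourhood of $u_0$.

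\textbf{Step 3: openness, density, and the main obstacle.} Set $\mathcal O:=\{v\in B_\epsilon(u_0;k):\vk_a(v)>0\text{ for some knotted }a\}$; by Step~2 every $v\in\mathcal O$ is an initial condition of the kind in (1). For $v\in\mathcal O$ the knotted elliptic orbit of $\rot v$ and the twist nondegeneracy of its return map should persist under $C^k$-small perturbations of $v$ (implicit function theorem plus persistence of KAM tori), keeping $\vk_a$ positive nearby, so $\mathcal O$ is open; and since any $w_1\in B_\epsilon(u_0;k)$ has $\rot w_1$ close to the nondegenerate integrable $\rot u_0$ for $\epsilon$ small, repeating Step~2 near a resonant horizontal torus with a perturbation of size $<\min(\eta,\epsilon-\|w_1-u_0\|_k)$ gives $w'\in B_\epsilon(u_0;k)\cap\mathcal O$ with $\|w'-w_1\|_k<\eta$, so $\mathcal O$ is dense; this yields (2). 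The hardest part will be Step~2: performing the Poincar\'e--Birkhoff/KAM construction \emph{within the class of curls of divergence-free fields}, in the finite regularity $C^{k-1}$ permitted by $k>5$, with exact control of the isotopy class of the new invariant tori and a proof that this class cannot occur near $u_0$; a secondary difficulty is making the KAM-rigidity estimate of Step~1 uniform over a full $C^k$-neighbourhood of $u_0$, which is precisely what the nondegeneracy in the definition of a shear flow should secure.
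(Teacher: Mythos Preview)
Your proposal is correct and follows essentially the same route as the paper: conservation of $\vk_a(\rot\,\cdot)$ under the Euler flow, a KAM rigidity estimate near $u_0$ bounding $\vk_a$ for knotted classes by a function of the $C^k$-distance to $u_0$, and the Poincar\'e--Birkhoff/elliptic-KAM construction of knotted invariant tori near a resonant circle, with openness and density obtained just as you describe. The difficulty you flag at the end---realising a prescribed generic perturbation of the Poincar\'e map by a $C^{k-1}$-small divergence-free perturbation of the vorticity---is resolved in the paper via a suspension theorem for exact area-preserving annulus maps (Treschev); one notational point: throughout you should write $\vk_a(\rot v_0)$ rather than $\vk_a(v_0)$, since it is the vorticity's invariant tori that are transported.
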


In other words, arbitrarily close to a stationary nondegenerate shear Euler flow $u_0$
there is an initial condition $v_0$, such that the solution $S_t(v_0)$ remains at a positive distance from $u_0$.
 In the case of the sphere $\mathbb S^3$ we  prove a   strengthening of Theorem~\ref{T:main0}.

\begin{theorem}\label{T:main}
For a  stationary nondegenerate shear Euler flow
$u_0$ on $\mathbb S^3$ we fix a non-integer $k>10$ and a sufficiently small constant $\eta>0$ (depending on $u_0$).
Then for any $\epsilon>0$ there exists a vector field $v_0\in B_\epsilon(u_0;k)$ and a positive constant $\delta$ such that
$$
\|\mathcal S_t(v_0)-\widetilde u_0\|_k>\delta
$$
for any stationary Euler flow $\widetilde u_0\in B_\eta(u_0,k)$ and all $t$ for which the Euler flow $\mathcal S_t(v_0)$ is defined. As above, the set of such initial conditions $v_0$ is open and dense in $B_\epsilon(u_0;k)$, provided that $\epsilon$ is sufficiently small.
\end{theorem}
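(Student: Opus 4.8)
The plan is to play the conserved integrability functionals $\vk_a$ off against a geometric classification of the steady states near $u_0$. Fix once and for all a nontrivial torus‑knot type $a_0$. I would establish three facts. (i) Arbitrarily close to $u_0$ in $\SVect^k(\mathbb S^3)$ there is a field $v_0$ whose vorticity $\rot v_0$ carries a positive‑measure family of ergodic invariant $2$‑tori knotted as $a_0$, so $\vk_{a_0}(v_0)=:c>0$. (ii) $\vk_{a_0}$ is an integral of motion: since vorticity is transported by the (volume‑preserving) fluid flow, the ergodic invariant tori of $\rot\mathcal S_t(v_0)$ are carried onto those of $\rot v_0$ by a diffeomorphism of $\mathbb S^3$ isotopic to the identity, so their total measure and their knot types are frozen and $\vk_{a_0}(\mathcal S_t(v_0))\equiv c$. (iii) For every $c>0$ there is $\delta>0$ such that $\vk_{a_0}(w)<c$ for every $w$ lying within $C^k$‑distance $\delta$ of \emph{any} stationary Euler flow in $B_\eta(u_0;k)$. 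Granting these: pick $v_0$ as in (i), set $c=\vk_{a_0}(v_0)>0$ and $\delta=\delta(c)$ from (iii); if $\|\mathcal S_t(v_0)-\widetilde u_0\|_k<\delta$ for some $t$ in the existence interval and some steady $\widetilde u_0\in B_\eta(u_0;k)$, then (iii) would force $\vk_{a_0}(\mathcal S_t(v_0))<c$, contradicting (ii). Since having $\vk_{a_0}(v_0)>0$ already suffices for the conclusion (with $\delta=\delta(\vk_{a_0}(v_0))$), and (i) will in fact hold on an open dense subset of $B_\epsilon(u_0;k)$, the set of admissible $v_0$ is open and dense.

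For (i) I would implement the ``knotted tori from elliptic orbits'' mechanism promised in the abstract. Because $u_0$ is a nondegenerate shear flow, $\rot u_0$ has almost every point on a nonresonant invariant torus and its rotation number sweeps an open interval (the twist condition); pick in it a rational $p/q$ with $|p|,|q|\ge2$, so the periodic orbits filling the corresponding resonant torus are $(p,q)$‑torus knots in $\mathbb S^3$, and let $a_0$ be that knot type. A small divergence‑free perturbation $u_0\rightsquigarrow v_0$ supported near the resonant zone (which can be prescribed freely through the vorticity, since every divergence‑free field on $\mathbb S^3$ is a curl) creates, via Poincar\'e--Birkhoff, an elliptic periodic orbit isotopic to the $(p,q)$‑torus knot; a twist/KAM argument about this orbit then yields a positive‑measure nest of invariant tori, each isotopic to the boundary of a thin tube around the orbit and hence knotted as $a_0$, so $\vk_{a_0}(v_0)>0$ with $v_0\in B_\epsilon(u_0;k)$. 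Taking $k>10$ makes $\rot v_0\in C^{>9}$, which is far more than enough to run the twist theorem about the elliptic orbit and to obtain a KAM‑nondegenerate invariant torus persistent under $C^k$‑small perturbations of $v_0$; this yields the openness. Density for small $\epsilon$ follows because the perturbed‑twist structure at the $p/q$‑resonance survives for $\rot w$ for every $w\in B_\epsilon(u_0;k)$, so the same Poincar\'e--Birkhoff$+$KAM perturbation can be carried out starting from an arbitrary such $w$.

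The heart of the matter is (iii), which requires a description of \emph{all} steady Euler flows in $B_\eta(u_0;k)$. Writing the stationary equation as $\widetilde u_0\times\rot\widetilde u_0=\nabla\mathcal B$, the classical (Arnold) structure theory forces $\widetilde u_0$ and $\rot\widetilde u_0$ to be tangent, on $\{\nabla\mathcal B\ne0\}$, to the level surfaces of $\mathcal B$; for $\widetilde u_0$ in a small enough $C^k$‑ball these are embedded $2$‑tori $C^k$‑close to the Bernoulli level tori of $u_0$, with the critical set of $\mathcal B$ reduced to a pair of circles close to those of $u_0$ (the genus‑one Heegaard structure of $\mathbb S^3$ pinning down the isotopy types). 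Hence $\rot\widetilde u_0$ is an integrable shear‑type field all of whose invariant tori are isotopic to the \emph{unknotted} level tori of $\rot u_0$, so $\vk_{a_0}(\widetilde u_0)=0$, and the twist is nondegenerate with constants uniform over the steady flows in $B_\eta(u_0;k)$. A $C^{k-1}$‑small perturbation $\rot w$ of such a field then retains, by KAM with these uniform constants, a set of unknotted invariant tori of measure $>1-c$, whence $\vk_{a_0}(w)\le\sum_{a\ne0}\vk_a(w)<c$ once $\|w-\widetilde u_0\|_k$ is small enough, uniformly in $\widetilde u_0$. (A short bookkeeping of scales — the knotted measure produced in (i) is comparable to the size of the perturbation, while the bound here is governed by its square root — shows that the two constants can always be chosen consistently.)

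The main obstacle is the \emph{uniform} normal‑form control in (iii): one must rule out exotic steady states in $B_\eta(u_0;k)$ — with a thick critical set of $\mathcal B$, or with level surfaces degenerating into knotted tori — and must obtain KAM nondegeneracy constants independent of $\widetilde u_0$. This is precisely where the geometry of $\mathbb S^3$ and the surplus regularity enter: the implicit‑function and normal‑form analysis of the Bernoulli structure near $u_0$ consumes several derivatives before KAM is applied to the once‑differentiated field $\rot w$, which forces $k>10$ rather than the $k>5$ of Theorem~\ref{T:main0}; on $\mathbb T^3$, where the analogous classification of nearby steady states is weaker, only that weaker statement can be reached.
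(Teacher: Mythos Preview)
Your proposal is correct and follows essentially the same route as the paper: (i) is the paper's Proposition~\ref{prop:lambda} (Poincar\'e--Birkhoff plus KAM about a $(p,q)$--resonant orbit of $\rot u_0$), (ii) is Theorem~\ref{t3}, and (iii) is exactly the combination of the structure theorem for nearby steady states (Theorem~\ref{L:main}) with the uniform KAM estimate from~\cite{KKP14}, with the regularity bookkeeping you describe matching the paper's use of the finite--smoothness Vey theorem. The one technical ingredient you gloss over in (i) is the \emph{suspension} step (Appendix~\ref{app:suspen}): to pass from a generic area--preserving perturbation of the Poincar\'e map back to a $C^k$ divergence--free perturbation of $u_0$ with the prescribed return map one needs a nontrivial result of Treschev, without which the density claim for $v_0$ is not immediate.
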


Here the solution $S_t(v_0)$  with the initial condition $v_0$ remains at a positive distance not only from $u_0$, but from
any stationary solution in the a priori prescribed $\eta$-vicinity of $u_0$.
An explicit construction of nondegenerate shear  flows on the sphere 
  implies that   $u_0$  is not isolated  in the sense that there are plenty of stationary solutions
arbitrarily close to it. In fact, we provide (see Theorem~\ref{L:main}) a complete description of all steady states in a $C^k$-neighborhood ($k\geq 7$) of any stationary nondegenerate shear Euler flow on $\mathbb S^3$; this can be understood as a local
  3D version of Choffrut-Sverak's theorem~\cite{Sverak} that characterizes all steady states in a $C^k$-vicinity ($k\geq 12$) of certain nondegenerate stationary solutions of the Euler equation on the $2$-dimensional annulus, but the techniques we use are totally different.

The proof of the theorems above is divided in two steps. First, we show that in a sufficiently small neighborhood of a nondegenerate shear Euler flow there is
an open and dense set of vector fields (initial conditions for the Euler equation) exhibiting a positive measure set of ergodic invariant tori that are nontrivially knotted. They arise as a perturbation of a closed trajectory $\gamma$ of $\rot u_0$ which is a $(p,q)$-torus knot with rational rotation number. We show that, generically, this perturbation leads to the existence of new invariant tori whose cores trace the torus knot $\gamma$ and that are not isotopic to the invariant tori of $\rot u_0$.

Second, using the KAM theorem proved in~\cite{KKP14} and Helmholtz's transport of vorticity, we conclude that the evolution of this initial conditions cannot converge, in the suitable topology, to the shear steady state. In the case of the 3-sphere, the statement is stronger because we are able to characterize all the stationary solutions of the Euler equation in a neighborhood of a nondegenerate shear state. Key to prove these results are a KAM stability theorem for elliptic
points, a novel Vey-type result in finite regularity and a new suspension theorem for divergence-free vector fields.

The restriction on the regularity $k>10$ is due to the fact that in the proof we use a finite regularity Vey-type theorem (see Appendix~\ref{app4}), and the KAM theory for elliptic points (see Appendix~\ref{app3}). This regularity is optimal in the sense that it cannot be weakened following our approach. This ``local non-mixing" behaviour
could be compared with the asymptotic stability (in the $L^2$-sense) recently proved~\cite{BM} for shear flows close to the planar Couette flow of the two-dimensional Euler equation on the cylinder $\mathbb S^1\times\R$,
as well as with the results on wandering solutions of the 2D
Euler equation~\cite{Na91, Sh97} discussed above.

{\it Notation.} For a metric space $X$ and $w\in X$ by $B_\epsilon(w,X)$  we denote the open $\epsilon$-neighbourhood of $w$
in $X$, and abbreviate $B_\epsilon(w,SVect^k(M))=: B_\epsilon(w,k)$. For a space $X$ as above ``generic $x\in W$" means ``any
$x$, belonging to some (fixed) open and dense subset $O\subset X$".

\section{KAM, knots, and the integrability functional}

In this section we review some results from~\cite{KKP14} required to prove the main theorem:
the notion of  nondegeneracy in the context of divergence-free vector fields and the invariance of the integrability functional
for each isotopy class. We denote by $\mu$ the Riemanian volume form on a Riemann
 manifold $M$ (in this paper $\mathbb S^3$ or $\mathbb T^3$), and by $\SVect^k_{ex}(M)$  the space of exact divergence-free vector fields on $M$ of class $C^k$ (by  {\it exact} we mean that the field is the $\rot$ of another divergence-free field on $M$).

\subsection{Integrable divergence-free flows}

Let $w\in \SVect^{k}(M)$ be a vector field ($k\ge 1$) on a closed $3$-manifold $M$. Assume that there is a domain $\Omega\subset M$, invariant under the flow of $w$.

\begin{definition}\label{D:IN}
We say that $w\in \SVect^{k}(M)$
 is \emph{integrable} in the domain $\Omega$ if there are finitely many domains $\Omega_j$ of the form $\T^2\times (a_j,b_j)$ covering $\Omega$, with $0\leq a_j<b_j\leq 1$, and coordinates $(\theta_1,\theta_2,\rho)$ in each $\Omega_j$, where $(\theta_1,\theta_2)\in \T^2=(\R/ 2\pi\Z)^2$ and $\rho\in(a_j,b_j)$, such that $w$ is transverse to the sections $\{\theta_2=\text{const}\}$ at each point of $\overline{\Omega_j}$, and its Poincar\'e map $\Pi$ at the section $\{\theta_2=0\}$ takes the form
\begin{equation}\label{1}
\Pi(\theta_1,\rho)=(\theta_1+W_j(\rho),\rho)
\end{equation}
for some function $W_j(\rho)$ of class $C^{k}$ in $\Omega_j$. A vector field $w$ is called integrable nondegenerate if in addition the function $W_j$ satisfies the twist condition
\begin{equation}\label{2}
|W'_j(\rho)|\geq \tau
\end{equation}
for all $\rho\in(a_j,b_j)$ and some constant $\tau>0$, for all $j$. Obviously, the transversality condition implies that $w$ does not vanish in the closure
$\overline \Omega$.
\end{definition}

\begin{remark}
This definition of an integrable nondegenerate vector field guarantees applicability of the KAM theory
for divergence-free vector fields; see \cite{C-S} and \cite[Section~3]{KKP14}. Definition~\ref{D:IN} is a bit weaker than that we presented in~\cite{KKP14}, but it is better suited for our purposes, and sufficient to apply the KAM theorem stated in~\cite[Theorem 3.2]{KKP14}.
\end{remark}

\subsection{Isotopy classes and the integrability functional}

There  are many non-equivalent ways of embedding a torus in $M$. We say that two embedded tori $T^2_0$ and $T^2_1$ are \emph{isotopic} if there exists a continuous in $t$
 family of embedded tori $T^2_t$, $t\in[0,1]$, connecting $T^2_0$ and $T^2_1$. It is well known that this property is equivalent to the existence of an isotopy $\Theta_t:M\times[0,1]\to M$ such that $\Theta_0=id$ and $\Theta_1(T^2_0)=T^2_1$, see \cite{Hae61}. This equivalence relation defines the set of isotopy classes of embedded tori in $M$, a set that we denote by $\mathcal I(M)$. It is standard that the set of isotopy classes $\mathcal I(M)$ is countable, i.e. $\mathcal I(M)\cong \mathbb Z$.

Define the following \emph{collection of functionals} $\vk_a$, $a\in\mathcal I(M)$, on the space of exact divergence-free vector fields:

\begin{definition}\label{defka} (\cite{KKP14})
Given an isotopy class $a\in \cI(M)$, the \emph{integrability functional}
$$
\vk_a:\SVect^1_{ex}(M)\to [0,1]
$$
assigns to an exact  $C^1$-smooth divergence-free vector field $w$ the inner
measure of the set of ergodic
$w$-invariant two-dimensional $C^1$-tori lying in the isotopy class $a$. The sequence
 $$
  \cI(M) \ni a \mapsto \vk_a(w)
 $$
 is called the {\it integrability spectrum of $w$}.
\end{definition}

This functional on the field $\rot u$ is invariant under the evolution of $u$ according to the Euler equation, as  Kelvin's circulation law suggests. More precisely, since the vorticity is transported by the fluid flow, which is volume-preserving, we have the following~\cite[Theorem 6.1]{KKP14}:

\begin{theorem}\label{t3}
 If $u_0 \in\SVect^k(M)$ for $k>2$ and non-integer, then the functional $\vk_a (\rot \mathcal S_t(u_0))\,$ is constant for all $a\in \cI(M)$. In other words, the integrability spectrum of $\rot u_0$, that is the mapping
$$a\in\mathcal I(M) \mapsto \vk_a(\rot(u_0))\,,$$
is an integral of motion of the Euler equations on the space $\SVect^k(M)$.
\end{theorem}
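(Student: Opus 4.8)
The plan is to realize $\vk_a(\rot\mathcal S_t(u_0))$ as a purely measure-theoretic quantity attached to the vorticity field of the Euler solution and to show that the flow map of that solution transports all of the data defining it to the corresponding data at any later (or earlier) time. Fix $u_0\in\SVect^k(M)$ with $k>2$ non-integer, write $u(t,\cdot)=\mathcal S_t(u_0)$ on its maximal interval of existence, and let $\Phi_t\colon M\to M$ be the flow of the time-dependent velocity field, i.e. the solution of $\partial_t\Phi_t(x)=u(t,\Phi_t(x))$ with $\Phi_0=\mathrm{id}$. Since $u(t,\cdot)$ is $C^k$ in $x$ and $C^1$ in $t$, this ODE is well posed and each $\Phi_t$ is a diffeomorphism of $M$ of class $C^k$, in particular $C^1$; and since $\diver u(t,\cdot)=0$, equivalently $\mathcal L_u\mu=0$, one gets $\Phi_t^*\mu=\mu$, so $\Phi_t$ preserves the Riemannian volume.

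The heart of the argument is Helmholtz's transport of vorticity. Writing $\omega(t,\cdot):=\rot u(t,\cdot)$ and $\omega_0:=\rot u_0$, I would rewrite the vorticity equation \eqref{eqtrans} as $\partial_t\omega+\mathcal L_u\omega=0$ and apply the transport formula $\frac{d}{dt}\big(\Phi_t^*\omega(t,\cdot)\big)=\Phi_t^*\big(\partial_t\omega+\mathcal L_u\omega\big)$ to conclude $\Phi_t^*\omega(t,\cdot)\equiv\omega_0$, i.e. $\omega(t,\cdot)=(\Phi_t)_*\omega_0$. Hence $\Phi_t$ conjugates the flow of $\omega_0$ to that of $\omega(t,\cdot)$: it maps invariant sets to invariant sets, $C^1$-embedded invariant $2$-tori to $C^1$-embedded invariant $2$-tori, and preserves ergodicity, since the unique invariant probability measure on such a torus is pushed forward to the unique one on its image. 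Moreover $s\mapsto\Phi_s$, for $s$ between $0$ and $t$, is an ambient isotopy from $\mathrm{id}$ to $\Phi_t$, so $\Phi_t$ preserves the isotopy class of every embedded torus; in particular it fixes the class $a\in\cI(M)$.

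Putting these facts together, $\Phi_t$ restricts to a bijection from the family of ergodic $\omega_0$-invariant $C^1$-tori lying in class $a$ onto the analogous family for $\omega(t,\cdot)$, hence from the union $U_0^a$ of the former onto the union $U_t^a=\Phi_t(U_0^a)$ of the latter. Because $\Phi_t$ is a volume-preserving diffeomorphism and $\mu$ is a Radon measure, the inner measure of $U_t^a$ equals that of $U_0^a$: exhaust $U_0^a$ from inside by compact sets, whose images under $\Phi_t$ are compact subsets of $U_t^a$ of the same $\mu$-measure, and argue symmetrically with $\Phi_t^{-1}$. Therefore $\vk_a(\rot u_0)=\vk_a(\rot\mathcal S_t(u_0))$ for every $a\in\cI(M)$ and every admissible $t$, which is exactly the claimed invariance of the integrability spectrum. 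One should also note that no exactness hypothesis on $\mathcal S_t(u_0)$ is required, since $\vk_a$ is only ever evaluated on a field of the form $\rot(\cdot)$, which is automatically exact.

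The conceptual content---the vorticity is dragged along by a volume-preserving isotopy, so every quantity it determines is conserved---is transparent, and I do not expect a substantial obstacle. The work lies entirely in the technical bookkeeping at the prescribed low regularity: justifying well-posedness and $C^k$-smoothness of $\Phi_t$ together with the identity $\omega(t,\cdot)=(\Phi_t)_*\omega_0$ when $u_0\in C^k$ with $k>2$ non-integer (this is classical, but it is what guarantees that $\Phi_t$ preserves the $C^1$-regularity of tori and that $\rot u_0\in C^{k-1}\subset C^1$ lies in the domain of $\vk_a$), and the elementary remark that inner Lebesgue measure is invariant under volume-preserving diffeomorphisms. If any step merits care it is verifying that the transport formula and the flow are legitimate in Hölder classes rather than in $C^\infty$, but this presents no real difficulty.
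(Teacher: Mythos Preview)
Your argument is correct and is exactly the natural one: Helmholtz transport gives $\omega(t,\cdot)=(\Phi_t)_*\omega_0$ for the volume-preserving Lagrangian flow $\Phi_t$, which carries ergodic $C^1$ invariant tori of $\omega_0$ bijectively onto those of $\omega(t,\cdot)$ while preserving both isotopy class and inner volume. The paper does not actually prove this theorem here but cites it from \cite[Theorem~6.1]{KKP14}, where the proof is precisely along the lines you outline; so there is nothing to compare, and your write-up stands as a self-contained justification.
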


\section{Shear stationary solutions on the 3-sphere and 3-torus}\label{S.3}

\subsection{Nondegenerate shear flows on the sphere.}
In this subsection we introduce a family of integrable and nondegenerate
 stationary solutions of the Euler equation in $\mathbb S^3$ which will be used to define the stationary states that appear in the statements of Theorems~\ref{T:main0} and~\ref{T:main}.

Represent $\mathbb S^3$ as the unit sphere in $\mathbb R^4$:
$$\mathbb S^3:=\{(x,y,z,\xi)\in\R^4:x^2+y^2+z^2+\xi^2=1\}\,.$$
Following~\cite[Example 4.5]{KKP14}, let us consider the Hopf fields $u_1$ and  $u_2$ on $\es^3$
given by
\begin{equation*}
u_1=(-y,x,\xi,-z)|_{\es^3}\,, \qquad u_2=(-y,x,-\xi,z)|_{\es^3}\,
\end{equation*}
(note that $u_1$ and $u_2$  are tangent to $\STR$).
These fields satisfy the equations $\rot u_1=-2u_1$ and $\rot u_2=2u_2$, and hence they are divergence-free, see Appendix~\ref{App2}.
It is evident that the function
$$\rho:=(x^2+y^2)|_{\es^3}, \quad 0\le \rho\le1\,,
$$
is a first integral of both $u_1$ and $u_2$. Notice that the level set $\{\rho=c\}$ is diffeomorphic to the standard (unknotted)
 torus $\mathbb T^2=\R^2/(2\pi\Z)^2$ in $\mathbb S^3$ provided that $c\in (0,1)$, and the critical set
$\{\rho=0\}\cup \{\rho=1\}=:L_0\cup L_1$, where $L_0$ and $L_1$ are loops in $\STR$,
 is diffeomorphic to the Hopf link~\cite{Adams} in $\mathbb S^3$.

 In particular, $\Omega:= \STR \setminus (L_0\cup L_1)$ is diffeomorphic to $\T^2\times (0,1)$.
 The flows of the fields $u_1$ and $u_2$ together with the function $\rho$ provide the following
 explicit coordinates on this product.  Introduce the complex coordinates in the space
 $\R^4 \simeq \C^2 =\{ (\z_1, \z_2): \z_1 = x+iy, \z_2 = z+i\xi\}$.
 Points  $(\z_1, \z_2) \in \mathbb S^3\subset \C^2$ are parametrised via
 $\z_j =\sqrt \rho_j e^{i\theta_j}$ with $\rho_1+\rho_2 =1$. Hence
 \begin{align}\label{the_coord}
\STR\setminus (L_0 \cup L_1) &\simeq \T^2\times(0,1)\\\notag
&= \{ (\sqrt\rho e^{i\theta_1}, \sqrt{1-\rho} e^{i\theta_2}), \ (\theta_1, \theta_2)\in \T^2, 0<\rho<1\}\,.
\end{align}
In the coordinates $ (\theta_1, \theta_2, \rho)$ we have
\begin{equation}\label{relations}
u_1 = \frac{\p}{\p \theta_1} -\frac{\p}{\p \theta_2},\quad u_2 = \frac{\p}{\p \theta_1} +\frac{\p}{\p \theta_2},
\end{equation}
 and the volume form is
 $\mu=d{\rm Vol}\,{}_{\STR} =\tfrac12 d\theta_1 d\theta_2 d\rho$.

Consider the vector field
\begin{equation}\label{E:steady}
u:=f_1(\rho)u_1+f_2(\rho)u_2\,,
\end{equation}
where $f_1$ and $f_2$ are arbitrary $C^k$ real-valued functions.
Obviously $u$ is divergence-free and the function $\rho$ is a first integral.
The vorticity of this field $u$  turns out to be (see Appendix~\ref{App2}):
\begin{align*}
\rot u&=-[f_1'(\rho)(2\rho-1)+2f_1(\rho)+f_2'(\rho)]u_1\\&+[f_2'(\rho)(2\rho-1)+2f_2(\rho)+f_1'(\rho)]u_2=:A_1(\rho)u_1+A_2(\rho)u_2\,.
\end{align*}
Recall that on a Riemannian manifold $M$ the vorticity field $\omega=\rot u$ is defined by the condition
$i_\omega \mu=d\alpha$, where $\alpha=u^\flat$ is the dual 1-form of the vector field $u$ using the Riemannian metric on $M$: $\alpha(\cdot)=g(u,\cdot)$.

Now one can see that $u$ is a stationary Euler flow on the sphere $\STR$ (for any choice of the functions $f_1, f_2$)
as it commutes with $\rot u$, cf.~\cite{KKP14}.
Indeed, the function $\rho$ is a first integral of both $u$ and $\rot u$, while on each torus $\rho={const}$
these two fields are constant (in the coordinates $(\theta_1, \theta_2)$)
 and hence commute. Note also that the Hopf link $L_0\cup L_1$ and its complement $\Omega$ are invariant under the flows of $u$ and $\rot u$.

\begin{proposition}
There is a choice of functions $f_1$ and $f_2$ such that the vector field $\rot u$ is integrable and nondegenerate on
$\Omega:=\mathbb S^3\backslash (L_0\cup L_1)$ in the sense of Definition~\ref{D:IN}.
\end{proposition}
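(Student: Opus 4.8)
The plan is to give an explicit pair $f_1,f_2$ and to verify Definition~\ref{D:IN} by directly computing the first-return map of $\rot u$. In the coordinates $(\theta_1,\theta_2,\rho)$ on $\Omega\simeq\T^2\times(0,1)$ of \eqref{the_coord} and using \eqref{relations}, the vorticity $\rot u=A_1(\rho)u_1+A_2(\rho)u_2$ of the field \eqref{E:steady} becomes $\rot u=a(\rho)\,\p_{\theta_1}+b(\rho)\,\p_{\theta_2}$ with $a:=A_1+A_2$ and $b:=A_2-A_1$; in particular $\rho$ is a first integral of $\rot u$ and every torus $\{\rho=c\}$, $c\in(0,1)$, is $\rot u$-invariant. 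A short manipulation of the formulas for $A_1,A_2$ gives the compact identities
\[
a(\rho)=2\frac{d}{d\rho}\big[(1-\rho)(f_1-f_2)\big],\qquad b(\rho)=2\frac{d}{d\rho}\big[\rho(f_1+f_2)\big],
\]
which make the dependence on $f_1,f_2$ transparent.

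The idea is then to choose $f_1,f_2$ so that (i) $b$ is a nonzero constant, which makes $\rot u$ transverse to every section $\{\theta_2=\const\}$ and the Poincar\'e map globally defined, and (ii) the rotation number $a(\rho)/b(\rho)$ has nowhere-vanishing $\rho$-derivative, which is exactly the twist condition \eqref{2}. Both hold, for instance, for $f_1(\rho)=\tfrac12(1+\rho)$ and $f_2(\rho)=\tfrac12(1-\rho)$, for which $a(\rho)=2-4\rho$ and $b(\rho)\equiv2$. Integrating $\rot u$ from the section $\{\theta_2=0\}$: $\rho$ stays constant and $\theta_2$ grows at the constant rate $2$, so the first return to $\{\theta_2=0\}$ occurs at time $\pi$, at which $\theta_1$ has advanced by $a(\rho)\pi=(2-4\rho)\pi$. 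Hence $\Pi(\theta_1,\rho)=(\theta_1+W(\rho),\rho)$ with $W(\rho)=2\pi(1-2\rho)$, so $W$ is $C^\infty$ and $|W'(\rho)|\equiv4\pi$ on $(0,1)$. Taking the single covering domain $\Omega_1=\Omega=\T^2\times(0,1)$ (with $a_1=0$, $b_1=1$, which is allowed) with these coordinates then yields all the requirements of Definition~\ref{D:IN}, with twist constant $\tau=4\pi$.

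I do not expect a genuine obstacle: this is an existence statement and a clean explicit example does the job. The only mildly delicate point is the behaviour at $\rho\in\{0,1\}$, where the $(\theta_1,\theta_2)$-chart collapses onto the Hopf link $L_0\cup L_1$; but Definition~\ref{D:IN} only asks for integrability on the open set $\Omega$, and the accompanying ``$\rot u\neq0$ on $\overline\Omega$'' remark is checked directly --- for the choice above $\rot u=2u_2$ at $\rho=0$ and $\rot u=-2u_1$ at $\rho=1$, so $\rot u$ is nowhere zero on $\STR$. If later one also needs $u$ and $\rot u$ to stay close to a prescribed Hopf field, the same scheme still applies: prescribe instead $P:=(1-\rho)(f_1-f_2)$ and $Q:=\rho(f_1+f_2)$ with $Q'$ a nonzero constant and $P''$ of one sign (and the endpoint conditions $Q(0)=0=P(1)$), and recover $f_1,f_2$ from $P,Q$; the computation of $W$ is identical.
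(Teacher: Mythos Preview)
Your proof is correct and follows essentially the same route as the paper: write $\rot u$ in the $(\theta_1,\theta_2,\rho)$ coordinates, ensure the $\partial_{\theta_2}$-component is nonvanishing so a global Poincar\'e map exists on $\{\theta_2=0\}$, and verify the twist condition $|W'|\ge\tau>0$. Your explicit example $f_1=\tfrac12(1+\rho)$, $f_2=\tfrac12(1-\rho)$ (giving $b\equiv2$ and $W'\equiv-4\pi$) differs from the paper's choice $f_1=1+\rho$, $f_2=0$, and your compact derivative identities for $a$ and $b$ are a nice touch not present there, but the argument is the same in substance.
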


\begin{proof}
In view of~\eqref{relations} the field $\rot u$ reads in the coordinates $(\theta_1,\theta_2,\rho)$ as:
\begin{equation}\label{E:rot}
\rot u=f(\rho)\partial_{\theta_1}+g(\rho)\partial_{\theta_2}\,,
\end{equation}
where we have defined $f:=A_1+A_2$ and $g:=-A_1+A_2$.

If $\rot u$ does not vanish on $\mathbb S^3$, and we can cover $\Omega$ with finitely many domains such that either $f$ or $g$ do not vanish on the closure of these domains, we can define the Poincar\'e maps using the sections $\{\theta_1=0\}$ or $\{\theta_2=0\}$. These maps have the form $\Pi(\theta,\rho)=(\theta+W(\rho),\rho)$, with $W:=2\pi f/g$ (or $2\pi g/f$). It then follows from this expression that $\rot u$ is integrable and nondegenerate if the twist condition in Definition~\ref{D:IN} is satisfied, which in our case is equivalent to demanding that $|f'g-fg'|\geq \tau>0$ on the whole $\Omega$. It is easy to see that indeed the functions $f_1$ and $f_2$ defining the vector field $u$ can be chosen so that $f$ and $g$ fulfill this twist condition, a concrete example will be presented later.
\end{proof}

We are ready to define the family of steady Euler flows on $\mathbb S^3$ which is used in the statement of Theorems~\ref{T:main0} and~\ref{T:main}.

\begin{definition}\label{D:TI}
We say that a vector field $u\in \SVect^k(\mathbb S^3)$ is a stationary \emph{nondegenerate shear Euler flow} if it has the form~\eqref{E:steady}, and the $C^k$ functions $f_1$ and $f_2$ are such that:
\begin{enumerate}
\item The Bernoulli function $\mathcal B$ of the field $u$, defined by the equation \break $u \times \rot u = \nabla {\mathcal B}$,
 is Morse-Bott\footnote{That is, its critical set is 	 a closed submanifold of $\STR$ and its Hessian is nondegenerate in the normal directions.
 } and its critical set consists of the Hopf link $L_0\cup L_1$.
\item The functions $f$ and $g$, defined in Equation~\eqref{E:rot}, satisfy the twist condition $|f'(\rho)g(\rho)-f(\rho)g'(\rho)|\geq \tau$ for all $\rho\in(0,1)$ and some constant $\tau>0$.
\end{enumerate}
\end{definition}

Note that the second condition implies that $|g(\rho)| + |f(\rho)| \ge \tau_1>0$ for all $\rho$, and that the zero sets  of $f$ and $g$ are finite collections of points. So if $u$ is a steady nondegenerate shear Euler flow, then $\rot u$ does not vanish on $\STR$ and is integrable and nondegenerate (in the sense of Definition~\ref{D:IN})
on $\Omega:=\mathbb S^3\backslash (L_0\cup L_1)$. Besides, since $\min \mathcal B$ and $\max \mathcal B$ are two different critical values,
then one of them, minimum, is attained at $L_0$ and the other, maximum, on $L_1$ (or vice versa). The Hessian in directions transversal to $L_0$ is positive definite, and in directions transversal to $L_1$ is negative definite.

An explicit computation (see Appendix~\ref{App2}) of the Bernoulli function for the field $u$ defined by Equation~\eqref{E:steady} gives the
Bernoulli function expressed via $\rho$ up to an additive constant as
\begin{align}\label{E:Ber}
\mathcal B(\rho)=\int_0^\rho \Big[f_1(s)f_1'(s)+f_2(s)f_2'(s)+4f_1(s)f_2(s)\\+(2s-1)(f_1(s)f_2'(s)+f_2(s)f_1'(s))\Big]ds\,.
\end{align}

Let us now construct an explicit example of a stationary nondegenerate shear Euler flow to show that Definition~\ref{D:TI} is non-empty.

\begin{example}
We set
\begin{equation*}
f_1(\rho)=1+\rho\,,\qquad f_2(\rho)=0\,.
\end{equation*}
Straightforward computations show that the Bernoulli function has the expression
\begin{align*}
\mathcal B(\rho)=\rho+\frac12 \rho^2\,,
\end{align*}
and the functions $f$ and $g$ are
\begin{equation*}
f(\rho)=-4\rho\,,\qquad g(\rho)=2+4\rho\,.
\end{equation*}
Accordingly, since $F$ only assumes values in $[0,1]$, the Bernoulli function $\mathcal B$ is Morse-Bott and its critical set is the Hopf link $L_0\cup L_1$. Moreover, $g$ does not vanish for $\rho\in[0,1]$ and the twist condition holds because
$$
|f'g-fg'|=8\,.
$$
Therefore, the (non-vanishing) vector field
$$
u=(1+\rho)u_1
$$
is a stationary nondegenerate shear Euler flow on $\mathbb S^3$.
\end{example}

\subsection{Nearby stationary Euler flows.}
Here we prove a key result for Theorem~\ref{T:main}, which is interest by itself: a description of all stationary solutions $C^k$-close to steady nondegenerate shear ones.

\begin{theorem}\label{L:main}
Let $u$ be a stationary nondegenerate shear Euler flow in $\mathbb S^3$. There exists a positive constant $\eta$ such
that any stationary solution $\widetilde u$ of the Euler equation in the neighbourhood $B_\eta(u,k)$, $k\geq 7$, has a
Bernoulli function $\widetilde {\mathcal B}$ that is Morse-Bott with a critical set consisting of two closed curves
$\widetilde L_0\cup \widetilde L_1$ isotopic to the Hopf link $L_0\cup L_1$, the isotopy being $C^{k-1}$-close to the identity\,\footnote{
In particular, the distance between $\widetilde L_0\cup \widetilde L_1$  and $L_0\cup L_1$  is $\le C\eta$.
}.
Moreover, the vorticity $\rot \widetilde u$ is integrable and nondegenerate in the domain $\widetilde\Omega:=\mathbb S^3\backslash (\widetilde L_0\cup \widetilde L_1)$, and the integrability functional for the trivial (unknotted) isotopy class of embedded tori in $\mathbb S^3$ assumes the value $\vk_0(\rot \widetilde u)=|\mathbb S^3| =2\pi^2$.
\end{theorem}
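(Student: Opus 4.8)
The plan is to reduce everything to the geometry of the Bernoulli function $\cBT$ of $\widetilde u$, which is fixed (up to an additive constant) by $\nabla\cBT=\widetilde u\times\rot\widetilde u$. Since $\widetilde u\in C^k$ and $\rot\widetilde u\in C^{k-1}$, the right-hand side is $C^{k-1}$, so $\cBT\in C^k$; and as $\widetilde u\times\rot\widetilde u$ is $C^{k-1}$-close to $u\times\rot u=\nabla\cB$, after normalising the constant $\cBT$ is as $C^k$-close to $\cB$ as we wish once $\eta$ is small. Two features of $\cB$ will be used throughout: on $\Omega$ one has $\nabla\cB=\cB'(\rho)\nabla\rho$ with $\cB'\neq0$ (which, together with $\nabla\rho\neq0$ on $\Omega$, follows from condition (1) of Definition~\ref{D:TI}), so $u\times\rot u$ is nowhere zero on $\Omega$; and two vectors tangent to a level torus of $\cB$ have vanishing cross product precisely when they are parallel, so $u$ and $\rot u$ are nowhere proportional — in particular nowhere zero — on $\Omega$. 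Finally, the normal Hessian of $\cB$ is definite along $L_0$ and along $L_1$.

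The first and main step is to pin down $\mathrm{Crit}(\cBT)=\{\widetilde u\parallel\rot\widetilde u\}$. Outside a fixed tube around $L_0\cup L_1$ the quantity $|u\times\rot u|$ is bounded below, so $\nabla\cBT\neq0$ there; hence $\mathrm{Crit}(\cBT)$ sits in a thin tube $U_0\cup U_1$. On $U_0$ choose coordinates $S^1_s\times D^2_v$ adapted to $L_0=\{v=0\}$, with the metric block-diagonal along $L_0$. As $\mathrm{Hess}_v\cB$ is uniformly positive definite on $U_0$, so is $\mathrm{Hess}_v\cBT$ for $\eta$ small; thus for each $s$ the strictly convex function $v\mapsto\cBT(s,v)$ has a unique minimiser $v=\sigma(s)$, $\sigma\in C^{k-1}$, and every critical point of $\cBT$ in $U_0$ lies on the circle $\widetilde L_0:=\{v=\sigma(s)\}$. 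It remains to see $\widetilde L_0\subset\mathrm{Crit}(\cBT)$, i.e. that $\cBT$ is constant on $\widetilde L_0$. At a point of $\widetilde L_0$ that is \emph{not} critical one has $\nabla\cBT\neq0$ but $\nabla_v\cBT=0$, so $\nabla\cBT$ is nearly longitudinal; since $\widetilde u\perp\nabla\cBT$ (first integral), $\widetilde u$ is then nearly tangent to the normal disk $\{s=\text{const}\}$ — contradicting that $\widetilde u$ is $C^0$-close to $u$, which near $L_0$ is $C^0$-close to the non-zero longitudinal field $u|_{L_0}$ (generic case $u|_{L_0}\neq0$). Hence every point of $\widetilde L_0$ is critical. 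The same analysis near $L_1$ gives $\mathrm{Crit}(\cBT)=\widetilde L_0\cup\widetilde L_1$, so $\cBT$ is Morse–Bott with these two circles, which are $C^{k-1}$-close to $L_0,L_1$; the isotopy extension theorem upgrades this to an ambient isotopy $C^{k-1}$-close to the identity taking $L_0\cup L_1$ to $\widetilde L_0\cup\widetilde L_1$, so the latter is isotopic to the Hopf link. (In the degenerate case $u\equiv0$ along some $L_i$ the longitudinal input above is missing; there I would instead exclude isolated extrema of $\cBT$ via the hairy-ball theorem — a level sphere of $\cBT$ near $L_i$ would carry the non-vanishing field $\widetilde u$ — and combine this with the definite normal Hessian to recover a single critical circle.)

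The remainder is structural. On $\widetilde\Omega=\STR\setminus(\widetilde L_0\cup\widetilde L_1)$ the function $\cBT$ is a submersion with compact level sets, and the Morse–Bott structure with Hopf-link critical set forces each level set to be one embedded torus bounding a solid-torus sublevel set with core $\widetilde L_0$, hence an unknotted torus — a member of the trivial class $0\in\cI(\STR)$. On such a torus $T$ the fields $\widetilde u,\rot\widetilde u$ are tangent (common first integral $\cBT$), commute, and are independent (as $\widetilde u\times\rot\widetilde u=\nabla\cBT\neq0$ on $\widetilde\Omega$), so the $\R^2$-action they generate is locally free, hence transitive on the compact connected $T$, whence $T\cong\R^2/\Lambda_T$ with $\rot\widetilde u$ a constant field on it. Using an affine reparametrisation $\widetilde\rho\in(0,1)$ of $\cBT$ as transversal coordinate and a $C^{k-1}$-varying basis of $\Lambda_{\widetilde\rho}$, one gets a finite cover of $\widetilde\Omega$ by domains $\T^2\times(a_j,b_j)$ with coordinates $(\theta_1,\theta_2,\widetilde\rho)$ in which $\rot\widetilde u=\widetilde f_j(\widetilde\rho)\p_{\theta_1}+\widetilde g_j(\widetilde\rho)\p_{\theta_2}$, with $(\widetilde f_j,\widetilde g_j)$ $C^{k-1}$-close to $(f,g)$; the Poincar\'e map over $\{\theta_2=0\}$ (or over $\{\theta_1=0\}$ where $\widetilde g_j=0$) is the rotation $\theta_1\mapsto\theta_1+2\pi\widetilde f_j/\widetilde g_j$, and the twist $|\widetilde f_j'\widetilde g_j-\widetilde f_j\widetilde g_j'|\geq\tau/2$ is preserved, so $\rot\widetilde u$ is integrable and nondegenerate on $\widetilde\Omega$ in the sense of Definition~\ref{D:IN} (the one-derivative loss under $\rot$ is what makes $k\geq7$ natural here). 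Finally, by the twist condition the rotation number $\widetilde\rho\mapsto2\pi\widetilde f_j/\widetilde g_j$ (or its reciprocal) is strictly monotone, hence rational only on a null set of $\widetilde\rho$; off this set, of full measure in $\widetilde\Omega$ and so of measure $|\STR|=2\pi^2$ in $\STR$, the level torus carries an irrational linear — hence minimal and ergodic — flow of $\rot\widetilde u$ and lies in class $0$. Thus the inner measure of ergodic $\rot\widetilde u$-invariant tori in class $0$ is $\geq2\pi^2$, and cannot exceed $|\STR|$, so $\vk_0(\rot\widetilde u)=2\pi^2$.

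The main obstacle is the second step. Morse–Bott critical submanifolds are not stable under arbitrary $C^2$ perturbations — a critical circle can break into isolated critical points — so showing that $\mathrm{Crit}(\cBT)$ is again exactly two circles genuinely uses that $\cBT$ is the Bernoulli function of a steady Euler flow: the identity $\nabla\cBT=\widetilde u\times\rot\widetilde u$, the commutation $[\widetilde u,\rot\widetilde u]=0$, and, near the elliptic invariant circles $L_0,L_1$, a normal form valid in finite H\"older regularity. Carrying out that normal-form analysis in $C^k$, disposing of the degenerate cases, and keeping the twist constant uniform across the reparametrised foliation are where the real work lies.
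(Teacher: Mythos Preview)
Your overall architecture matches the paper's, but two points deserve comment; the second is a genuine gap that the paper fills with an ingredient you only gesture at.

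For the critical set, the paper avoids your case split on whether $u|_{L_i}=0$ by using $\rot\widetilde u$ rather than $\widetilde u$. The twist condition in Definition~\ref{D:TI} forces $|f|+|g|>0$ everywhere, so $\rot u$ --- and hence $\rot\widetilde u$ for small $\eta$ --- never vanishes on $\STR$. Since $\cBT$ is a first integral of $\rot\widetilde u$, its critical set is invariant under the flow of this non-vanishing field; a proper closed connected subset of the circle $\widetilde L_0$ would be a point or an arc, and $\rot\widetilde u$ would have to vanish at its boundary. This one-line dynamical argument replaces your longitudinal/transversal analysis and your hairy-ball detour entirely.

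The real gap is nondegeneracy near $\widetilde L_0$ and $\widetilde L_1$. Your claim that the frequency pair $(\widetilde f_j,\widetilde g_j)$ is $C^{k-1}$-close to $(f,g)$, hence inherits the twist bound, is valid only on compact pieces of $\widetilde\Omega$ bounded away from the critical link: there the Arnold--Liouville construction is $\eta$-robust and the paper argues exactly as you do. But near $\widetilde L_0$ the tori collapse to a circle, the reparametrisation of $\cBT$ is singular, and no closeness argument is available. The paper fixes this by passing to the Poincar\'e map $\widetilde\Pi$ of $\rot\widetilde u$ on a disk $\Sigma$ transverse to $\widetilde L_0$. After Moser's lemma normalises the invariant area form, a finite-regularity Vey theorem (Corollary~\ref{C:Vey}, proved in Appendix~\ref{app4}) puts $\widetilde\Pi$ in the form $(r,\theta)\mapsto(r,\theta+\hat W(r^2))$ with the change of variables in $C^{k-4}$. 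The twist $\hat W'(0)$ is then read off from $\partial^3\hat\Pi/\partial z^2\partial\bar z$ at the fixed point; since $\hat\Pi$ is obtained from the unperturbed $\Pi$ by an $\eta$-small $C^{k-4}$ perturbation plus a coordinate change $\eta$-close to the identity, and $\Pi$ has $W'(0)\neq0$, one gets $\hat W'(0)\neq0$ for $\eta$ small. This step is precisely where $k\ge7$ enters (one needs $k-4\ge3$ to control the third jet), not the one-derivative loss under $\rot$ that you cite. You correctly flag this normal-form analysis as ``where the real work lies''; the missing ingredient is Vey's theorem in finite smoothness.
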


\begin{proof}
Take any $\widetilde u\in B_\eta(u,k)$, and let $\widetilde {\mathcal B}$  be its Bernoulli function. Then
\begin{equation*}
\widetilde u \times \rot \widetilde u = \nabla \widetilde {\mathcal B}\,, \qquad \diver \widetilde u=0\,.
\end{equation*}
So $\widetilde {\mathcal B}$ is a $C^k$-smooth function and is a first integral of both $\widetilde u$ and $\rot\widetilde u$. Since
 $\widetilde u\in B_\eta(u,k)$, then
$$
\|\nabla(\mathcal B-\widetilde{\mathcal B})\|_{C^{k-1}}<C\eta\,.
$$
We may assume that $ {\mathcal B}$ and $\widetilde {\mathcal B}$ are normalised by the condition
$ {\mathcal B}(p_0)= \widetilde {\mathcal B}(p_0)=0$, where $p_0$ is a fixed point in $\STR$. Then the estimate above implies that
\begin{equation}\label{Eq:ck}
\|\mathcal B-\widetilde{\mathcal B}\|_{C^{k}}<C\eta\,.
\end{equation}

Since the Hessian matrix $D^2\mathcal B$ is positive definite on the normal bundle of $L_0$ and negative definite on the normal bundle of $L_1$, then by
the implicit function theorem, the critical set of $\widetilde{\mathcal B}$ is contained in a set of two $C^{k-1}$ curves $\widetilde {L_0}\cup \widetilde {L_1}$ isotopic to the Hopf link $L_0\cup L_1$ (the isotopy being $C^{k-1}$ close to the identity):
$$
\text{Cr}(\widetilde{\mathcal B})\subset \widetilde {L_0}\cup \widetilde {L_1}\,, \qquad  \widetilde {L_0}\subset  {L_0}+C\eta, \   \widetilde {L_1}\subset  {L_1}+C\eta,
$$
where $L_j+C\eta$ is the $C\eta$-neighborhood of $L_j$ in $\mathbb S^3$. Moreover,
 $D^2\widetilde{\mathcal B}$ is positive  definite on the normal bundle of $\widetilde {L_0}$ and  negative definite on the normal bundle of
 $\widetilde {L_1}$. We claim that both $\widetilde L_0$ and $\widetilde L_1$ are critical. Indeed, assume that this is not the case, so there is a connected component $\Delta$ of the critical set of $\widetilde{\mathcal B}$ that is a proper subset of, say, $\widetilde L_0$.
Then $\Delta$ is  a point or an interval by the classification of connected one-dimensional closed sets.
Since $\widetilde{\mathcal B}$ is a first integral of the vector field $\rot\widetilde u$,
the critical set of $\widetilde{\mathcal B}$ is invariant under the flow of this field, so the set $\Delta$ is invariant. Moreover, $\rot\widetilde u$ does not vanish on $\mathbb S^3$ because it is $C^{k-1}$-close to the non-vanishing field $\rot u$. This is a contradiction because $\rot\widetilde u$ should vanish at the boundary points of $\Delta$, thus proving that
$$
\text{Cr}(\widetilde{\mathcal B})= \widetilde {L_0}\cup \widetilde {L_1}\,.
$$
It follows from the previous discussion that $\widetilde {\mathcal B}$ is a Morse-Bott first integral of $\widetilde u$ and $\rot \widetilde u$, whose critical set is formed by the two closed curves $\widetilde L_0\cup \widetilde L_1$ that are isotopic to the Hopf link $L_0\cup L_1$, which are periodic orbits of $\rot\widetilde u$. The global minimum and maximum of the function $\widetilde {\mathcal B}$ are $\widetilde L_0$ and $\widetilde L_1$, respectively, and $D^2 {\mathcal B}$  is positive (negative) definite on the corresponding normal
bundle.
Accordingly, the regular level sets
of $\widetilde {\mathcal B}$ are diffeomorphic to two-dimensional tori that are unknotted in $\mathbb S^3$.

Now we prove that $\rot \widetilde u$ is integrable and nondegenerate in $\widetilde\Omega:=\mathbb S^3\backslash (\widetilde L_0\cup \widetilde L_1)$. We first observe that the stationary Euler equation implies that $\widetilde u$ and $\rot \widetilde u$ commute on $\mathbb S^3$,
$$
[\widetilde u, \rot \widetilde u]=0\,,
$$
and that they are linearly independent at each point of $\mathbb S^3\backslash (\widetilde L_0\cup \widetilde L_1)$, see~\cite{AKh}.
Let us first consider two $\eta$-independent neighborhoods $V_0:=\{0\leq \rho<\rho_0\}$ and $V_1:=\{\rho_1<\rho\leq 1\}$
of $L_0$ and $ L_1$, respectively, where $\rho_0$ and $1-\rho_1$ are small enough (recall that the coordinate $\rho$ was
defined in Equation~\eqref{the_coord}). Assume that $\eta$ is so small that $\widetilde{L_j} \subset V_j$.
The proof of the non-symplectic part of the
Arnold-Liouville theorem~\cite[Chapter II, Proposition 1.5]{AKh} shows that there are
 $C^{k-1}$-smooth coordinates $\tilde\rho=\widetilde{\mathcal B}$ and $(\tilde\theta_1, \tilde\theta_2)\in \mathbb T^2$ in the complement
$\mathbb S^3\backslash\{V_0\cup V_1\}$ such that the vector field $\rot \widetilde u$ (and also $\widetilde u$) reads as
$$
\rot\widetilde u={\tilde f} ({\tilde \rho})\partial_{\tilde \theta_1} + {\tilde g}({\tilde \rho}) \partial_{\tilde \theta_2}\,,
$$
for some $C^{k-1}$ functions $\tilde f$ and $\tilde g$. Since the functions $\widetilde{\mathcal B}$ and $\mathcal B$
are $\eta$-close, as well as the fields $(u,\rot u)$ and $(\widetilde u,\rot \widetilde u)$, then in the domain
$\mathbb S^3\backslash\{V_0\cup V_1\}$
the tilde--coordinates can be chosen $\eta$--close
to the original coordinates $(\theta_1,\theta_2,\rho)$ in the  $C^{k-1}$-norm.
 The functions $\tilde f, \tilde g$ are then $\eta$-close to $f$ and $g$. Therefore, since $\rot u$ is integrable and nondegenerate in $\Omega$, we conclude that the functions $\tilde f$ and $\tilde g$ satisfy the twist condition
$$
|\tilde f'\tilde g-\tilde f\tilde g'|\geq \tau-C\eta>0
$$
in $\mathbb S^3\backslash\{V_0\cup V_1\}$, thus showing that $\rot \widetilde u$ is integrable and nondegenerate on that domain.

Now we proceed to prove that $\rot\widetilde u$ is integrable and nondegenerate in $V_0\backslash\widetilde{L_0}$ (the analysis of $V_1$ is completely analogous). Indeed, take the coordinates $(\theta_1,\theta_2,\rho)$ defined in~\eqref{the_coord} to parametrize $V_0$, and assume without loss of generality that $g(\rho)>0$ for $0\leq \rho \leq \rho_0$ ($g$ was defined in~\eqref{E:rot}). Consider the disk
$\Sigma:=\{\theta_2=0\}\cap V_0$. It defines a  Poincar\'e map $\Pi$  for the field $\rot u$, and we see from \eqref{E:rot} that
\begin{equation}\label{pipi}
\Pi(\rho,\theta_1)=(\rho,\theta_1+W(\rho)), \qquad W=2\pi f/g.
\end{equation}
 So $\Pi$ preserves the standard area form
$\mu_2^0 := d\rho \wedge d\theta_1$.  Since $\rot\widetilde u$ is $\eta$-close to $\rot u$, it follows that the disk $\Sigma$ also
defines a  Poincar\'e map $\tilde\Pi$  for the field $\rot\widetilde u$. The latter has a fixed point at $\widetilde{L_0}\cap\Sigma$. Denote
by $D_R \subset \Sigma$ a sufficiently small $\eta$-independent disk, centered at that point. Then
$
\tilde\Pi :D_R \to \Sigma
$
is a $C^{k-1}$-diffeomorphism of $D_R$ onto its image, which is $\eta$-close to $\Pi$. Since $\rot\tilde u$ is divergence-free, then
$\tilde\Pi$ preserves a $C^{k-1}$-area form $\mu_2$, which is $\eta$-close to the standard form $\mu_2^0$, preserved by $\Pi$.
Moreover, the $C^k$ function $\widetilde H:=\widetilde{\mathcal B}|_{\Sigma}$ is a first integral of $\widetilde\Pi$ which has a Morse
minimum at $0\in D_R$.

Moser's lemma implies that for small $\eta$ there exists a $C^{k-1}$-change of variables $\Phi:D_{R_1} \to D_R$,
$R_1:=R/2$, $\eta$-close to the identity, and such that $\Phi(0) =0$,  $\Phi^*\mu_2 =\mu_2^0$.  We will pass to the new
variables, keeping the notations $\tilde\Pi$ and $\tilde H$ for the two introduced objects. Observe that, in the new coordinates, $\tilde H$ is of class $C^{k-1}$, and $\tilde \Pi$ preserves the standard area form $\mu_2^0$.
Next, Corollary~\ref{C:Vey} ensures that there
exists a $C^{k-4}$ change of variables
$
\Psi:D_{R_0} \to  \Psi(D_{R_0} ) \subset D_{R_1}$, $\Psi(0)=0,
$
such that in the polar coordinates in the new variables the transformed diffeomorphism $\hat\Pi :=  \Psi^{-1} \circ \Pi\circ\Psi$ reads as
\begin{equation}\label{map_Vey}
\hat \Pi(r, \theta_1)=(r, \theta_1 + \hat W(r^2))\,,
\end{equation}
where $\hat W(t) \in C^{[(k-4)/2]} \subset C^1$ and $\hat W(t) \in C^{k-4}$ for $t>0$.
Introducing in $D_{R_0} $ the complex coordinate $z=x+iy$ and denoting
$$
f_0:= \hat W(0), \quad \frak z :=e^{i f_0},\quad f_1 :=  \hat W'(0),\quad f(t) :=  \hat W(t) -  f_0\,,
$$
we write the map $\hat\Pi$ as
$$
\hat\Pi (z) = ze^{i  \hat W(|z|^2)} = \frak z z e^{if(|z|^2)} =  \frak z z ( 1+ if_1|z|^2) + o(z^3)\,.
$$
Since $\hat \Pi \in C^{k-4} \subset C^3$, then
$
{\p^3 \hat\Pi}(0)/  {\p z^2 \p\bar z} = 2i\frak z f_1 = 2i\frak z  \hat W'(0).
$
A similar relation holds for $\Pi$, with $W'(0) \ne0$ in view of~\eqref{pipi}. Since $\hat\Pi$ is obtained from $\Pi$ by an $\eta$-small
$C^{k-4}$-perturbation, $k\ge7$,  and by a change of variable which is $\eta$-close to  identity in the $C^{k-4}$-norm, then $\hat W'(0)\ne0$, if
$\eta\ll1$. So $\rot\tilde u$ is integrable and nondegenerate in $V_0\setminus \tilde L_0$ (of course, we assume that $V_0$ and $\eta$ are sufficiently small).

Putting together the previous paragraphs, we conclude that $\rot \widetilde u$ is integrable and nondegenerate in $\widetilde\Omega$. Moreover, since the regular level sets of $\widetilde{\mathcal B}$ are unknotted invariant tori and cover the whole domain $\widetilde\Omega$, we obtain that $\vk_0(\rot \widetilde u)=|\mathbb S^3|=2\pi^2$, and the theorem follows.
\end{proof}

\subsection{Shear stationary flows on the 3-torus}\label{S.4}

Let us first recall the construction of shear stationary solutions on $\mathbb T^3=\{(x,y,z)\in \mathbb R^3 \, (\mod 2\pi)\}$ introduced in~\cite{KKP14}. Consider the divergence-free vector field $u$ defined by
\begin{equation}\label{E:torus}
u = f(z)\partial_x + g(z)\partial_y\,,
\end{equation}
where $f$ and $g$ are $C^k$ $2\pi$-periodic functions. Any function of $z$ is a first integral of $u$, hence the
integral curves of this vector field are tangent to the tori $T_c := \{z = c\}$, and on each torus the field is constant.
The same holds for the vorticity field
$$ \rot u = -g'(z)\partial_x+ f'(z)\partial_y\,.$$
This implies that the field $u$ is a solution of the steady Euler equation on $\mathbb T^3$, since
the fields $u$ and $\rot u$ commute, $[u, \rot u] = 0$.
The corresponding Bernoulli function, which is defined (modulo a constant) by $u\times \rot u=\nabla  \mathcal B$,
is $\mathcal B = (f^2 + g^2)/2$.

The following definition of nondegenerate shear Euler flows on $\mathbb T^3$ is analogous to Definition~\ref{D:TI}, however, unlike the case of $\mathbb S^3$, the fields will be nondegenerate not on a set of full measure, but almost full: for any given $\varepsilon>0$ they will be nondegenerate on a set of measure
$8\pi^3 - \varepsilon$. This turns out to be sufficient for the proof of Theorem~\ref{T:main0}, but does not provide a full description of nearby
steady flows for the validity of an analogue of Theorem~\ref{T:main} for the torus.

\begin{definition}\label{D:torus}
We say that a vector field $u\in \SVect^k(\mathbb T^3)$ is a stationary \emph{nondegenerate shear Euler flow} if it has the form~\eqref{E:torus}, and the $C^k$ functions $f$ and $g$ are such that:
\begin{enumerate}
\item The Bernoulli function $\mathcal B=(f^2(z) + g^2(z))/2$ of the field $u$,
 is Morse-Bott and its critical set consists of a finite number of tori $\{z=z_k~(\mod 2\pi)\}$.
\item The functions $f$ and $g$ satisfy the twist condition $|f''g'-f'g''|\geq \tau>0$ on the set $\mathbb T^3_\tau:=\mathbb T^2\times \Omega_\tau\subset \mathbb T^3$, i.e.  for all $z\in\Omega_\tau\subset [0,2\pi]$, where $\Omega_\tau$ consists of a finite number of disjoint intervals in $[0,2\pi]$ and $\Omega_\tau \to [0,2\pi]\backslash \cup_{j} \tilde z_j$ as $\tau\to 0$. Here $\{\tilde z_j\}$ is the set of points where $f''g'-f'g''$ vanishes, which is assumed to be finite.
\end{enumerate}
\end{definition}

One can easily see that the shear Euler flow $u$ in Equation~\eqref{E:torus} is nondegenerate for generic functions $f$ and $g$. Note also that all the invariant tori of $\rot u$ are ``horizontal" and are of one and the same nontrivial isotopy class.

\begin{remark}\label{L:mainT}
Similarly to Theorem~\ref{L:main}, for a stationary nondegenerate shear Euler flow  $u$ on $\mathbb T^3$ there exists a positive constant $\eta(\tau)$ depending on $\tau$ such that any stationary solution $\widetilde u$ in a neighbourhood $B_\eta(u,k)$, $k\geq 2$, has a Bernoulli function $\widetilde {\mathcal B}$ which is a perturbation of the Morse-Bott function $\mathcal B$ on $\mathbb T^3$. Its level sets on the complement of a neighborhood of $\mathbb T^3\backslash \bigcup_k\{z=z_k\}$ are regular and isotopic to horizontal tori on $\mathbb T^3$. On the other hand, $\widetilde u$ is nondegenerate on a set $\widetilde{\mathbb T}^3_\eta$ diffeomorphic to $\mathbb T^3_{\eta}$, and the integrability functional for the isotopy class  of ``horizontal"  embedded tori in $\widetilde{\mathbb T}^3_{\eta(\tau)}$ tends to
$|\mathbb T^3|=8\pi^3$ as $\tau\to 0$.
\end{remark}

The proof of Remark~\ref{L:mainT} repeats the proof of Theorem~\ref{L:main} for the sphere, except for the description of the critical set of
a perturbation $\widetilde {\mathcal B}$ of the Morse-Bott Bernoulli function ${\mathcal B}$. Now one cannot guarantee that the critical set of $\widetilde {\mathcal B}$ consists of tori, just like for the function ${\mathcal B}$, since the fields $u$ and $\rot u$ become collinear
on those critical sets, which are $2$-dimensional (the fact that the dimension of the critical set of $\mathcal B$ is $1$ is crucial in the proof of Theorem~\ref{L:main}). This is why for the torus one introduces the set $\mathbb T^3\backslash \bigcup_k\{z=z_k\}$ where the Bernoulli function $\mathcal B$ has no critical points.

Nevertheless, Definition~\ref{D:torus} is sufficient to ensure that the inequality $\|\mathcal S_t(v_0)- u_0\|_k>\delta$
in Theorem \ref{T:main0} holds for the field $u_0$ (without the claim about any stationary solution $ \widetilde u_0$). Indeed, given a nondegenerate shear steady field $u_0$ in $\mathbb T^3$, the proof in the next section uses only the existence of a periodic orbit of $\rot u_0$ that is a $(p,q)$-torus knot, which holds for such a field thanks to the nondegeneracy assumption.

\section{Proof of Theorems~\ref{T:main0} and~\ref{T:main} on local non-mixing}\label{S:5}
We will focus on the proof of Theorem~\ref{T:main}, and comment on the (much easier) proof of Theorem~\ref{T:main0} in Remarks~\ref{Rem:tor} and~\ref{Rem:last} below. The proof is divided in two steps. In the first one we show that in any small enough neighborhood of $u_0$, a generic vector field exhibits a positive measure set of ergodic invariant tori that are nontrivially knotted; in the second step we combine this result with the theory developed in~\cite{KKP14}. In the proof, a KAM theorem for generic elliptic points of area preserving maps (Appendix~\ref{app3}), and a suitable suspension construction (Appendix~\ref{app:suspen}) are instrumental. We recall that in this section $k>10$ is a fixed non-integer number, and we shall use the Notation introduced in Section~\ref{s0} without further mention.
\\

\noindent {\bf Step 1:} Let $u_0$ be a stationary nondegenerate shear Euler flow. The main result of this step is the following  proposition:

\begin{proposition}\label{prop:lambda}
For a sufficiently small $\epsilon>0$ and any generic divergence-free vector field
 $v_0$ in  $B_\epsilon(u_0;k)$,
 $\rot v_0$ has a positive measure set of ergodic invariant tori that are not isotopic to the standard (unknotted) torus in $\mathbb S^3$.
 That is,
$
\vk_a(\rot v_0)\geq \lambda(\epsilon)>0\,,
$
for some $a\neq 0$.
\end{proposition}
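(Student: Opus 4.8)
The plan is to produce the knotted invariant tori by a two-stage perturbation of the shear flow $u_0$, working entirely on the level of the vorticity field $\rot v_0$ and exploiting the fact that $\rot u_0$ has closed orbits that are torus knots. First I would fix the nondegenerate shear steady state $u_0 = f_1(\rho)u_1 + f_2(\rho)u_2$ and consider its vorticity $\rot u_0 = f(\rho)\partial_{\theta_1} + g(\rho)\partial_{\theta_2}$ in the coordinates of~\eqref{the_coord}. By the twist condition $|f'g - fg'| \ge \tau > 0$, the rotation number $W(\rho) = 2\pi f(\rho)/g(\rho)$ (on a chart where $g \ne 0$) is strictly monotone, so for any coprime pair $(p,q)$ with $p/q$ in the range of $W/2\pi$ there is a value $\rho_* \in (0,1)$ at which every orbit of $\rot u_0$ on the torus $\{\rho = \rho_*\}$ is closed and, as a curve in $\mathbb S^3$, is a $(p,q)$-torus knot $\gamma$ (nontrivially knotted once $p,q \ge 2$). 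The Poincaré map $\Pi$ of $\rot u_0$ on the transversal section $\Sigma = \{\theta_2 = 0\}$ is the integrable twist map $(\rho,\theta_1)\mapsto(\rho,\theta_1 + W(\rho))$, and the closed orbit $\gamma$ corresponds to a circle of fixed points of $\Pi^q$ at radius $\rho_*$.

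Next I would perturb. A generic divergence-free $v_0$ close to $u_0$ in $C^k$ has a vorticity $\rot v_0$ that is $C^{k-1}$-close to $\rot u_0$; near $\gamma$ it is still transverse to (a slightly deformed) $\Sigma$, giving a perturbed Poincaré map $\Pi_{v_0}$ that is an exact symplectic ($C^{k-1}$, area-preserving) perturbation of the integrable twist map $\Pi$. By the Poincaré–Birkhoff theorem applied to $\Pi_{v_0}^q$ near the invariant circle $\{\rho = \rho_*\}$, the circle of fixed points generically breaks into an even number of isolated fixed points of $\Pi_{v_0}^q$, alternately elliptic and hyperbolic; I would show that the ``elliptic'' alternative holds for an open dense set of perturbations (the condition that some resulting fixed point be elliptic and generic — nonzero twist, no low-order resonance — is an open dense condition on the finitely many jets of $v_0$ along $\gamma$, which one can enforce by an explicit divergence-free perturbation supported in a tube around $\gamma$, using e.g. the suspension construction of Appendix~\ref{app:suspen} to realize prescribed area-preserving maps as return maps). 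Around such a generic elliptic periodic point one invokes the KAM theorem for elliptic fixed points of area-preserving maps (Appendix~\ref{app3}): it yields a positive-measure Cantor set of invariant circles of $\Pi_{v_0}^q$ encircling the elliptic point, on which the map is conjugate to an irrational rotation. Suspending these invariant circles along the flow of $\rot v_0$ (they are $\Pi_{v_0}$-invariant after taking the $q$-fold orbit of the section, hence trace out genuine $\rot v_0$-invariant $2$-tori) produces a positive-measure family of ergodic invariant tori of $\rot v_0$.

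It then remains to identify the isotopy class of these tori and conclude $\vk_a(\rot v_0) \ge \lambda(\epsilon) > 0$ for some $a \ne 0$. The cores of these tori shadow the periodic orbits of $\Pi_{v_0}^q$ near $\rho_*$, which are $C^0$-close to $\gamma$, so as embedded curves in $\mathbb S^3$ they are isotopic to the $(p,q)$-torus knot $\gamma$; a thin tube around such a knotted core is a solid torus whose boundary torus is knotted (not isotopic to the boundary of an unknotted solid torus, i.e. to the standard tori $\{\rho = c\}$), since the $(p,q)$-knot with $p,q \ge 2$ is nontrivial. A small $C^1$-neighbourhood of $\gamma$ then forces all these KAM tori into one fixed nonzero isotopy class $a \in \cI(\mathbb S^3)$, and the inner measure of their union is bounded below by the KAM measure estimate, which depends only on the size of the twist and the size of the perturbation — uniformly over the open dense set of $v_0$, giving the quantitative bound $\lambda(\epsilon)$. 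The openness of the property follows because ``$\rot v_0$ has a generic elliptic periodic point near $\gamma$'' is an open condition, and the KAM tori persist under further small perturbation; density follows from the explicit construction sketched above.

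The main obstacle I expect is the regularity bookkeeping in the perturbation-and-KAM step: one loses one derivative passing from $v_0$ to $\rot v_0$, further derivatives in straightening the section and applying Moser's lemma to normalize the area form (as in the proof of Theorem~\ref{L:main}, where a $C^{k-4}$ map emerges), and the KAM theorem for elliptic points demands a definite finite regularity — this is exactly the source of the $k > 10$ hypothesis, and checking that enough derivatives survive, and that the nondegeneracy (nonzero twist, absence of resonances up to the order required by KAM) of the resulting elliptic point is genuinely open and dense rather than merely nonempty, is the delicate part. A secondary technical point is making the suspension construction produce a bona fide divergence-free field on $\mathbb S^3$ that is $C^k$-close to $u_0$ and realizes the desired return map near $\gamma$ while being untouched elsewhere.
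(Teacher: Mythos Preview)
Your proposal is correct and follows essentially the same route as the paper: locate a resonant $(p,q)$-torus-knot orbit of $\rot u_0$, use Poincar\'e--Birkhoff plus a genericity argument (the paper invokes Robinson~\cite{Ro70}) to obtain a nondegenerate elliptic $q$-periodic point of the perturbed return map, apply the KAM stability of Appendix~\ref{app3}, and achieve density via the suspension theorem of Appendix~\ref{app:suspen} followed by continuity of $\rot^{-1}$. One minor correction to your regularity remarks: the Moser-lemma/Vey-type loss down to $C^{k-4}$ is used only in Theorem~\ref{L:main} to characterize nearby steady states, not in this proposition itself---this is why Theorem~\ref{T:main0} needs only $k>5$ (cf.\ Remark~\ref{Rem:last}).
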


\begin{remark}\label{Rem:tor}
The same statement holds for  a stationary nondegenerate shear Euler flow  $u_0$ on $\mathbb T^3$
with a full measure set of ``horizontal"  invariant tori:
there exists an arbitrarily close perturbation $v_0$ whose  $\rot v_0$ has a positive measure set of ergodic invariant tori
not isotopic to the standard ``horizontal"   ones on $\mathbb T^3$. We prove the proposition for  $\mathbb S^3$, while the
$\mathbb T^3$ case works verbatim and, more generally, the argument has  a ``local nature".
\end{remark}

\begin{proof}
Consider the vector field $\rot u_0$ in $\Omega=\mathbb S^3\backslash(L_0\cup L_1)$ written in coordinates
$(\theta_1,\theta_2,\rho)$ (see Equations~\eqref{the_coord} and~\eqref{E:rot}), and take an interval
$(a,b)\subset (0,1)$ such that   $g(\rho)$ does not vanish on its closure. We will denote
$
A(a,b) = \mathbb S^1 \times \{\theta_2=0\} \times (a,b),
$
and will identify $A(a,b)$ with the annulus $ \mathbb S^1 \times (a,b)$. The  Poincar\'e return map $\Pi_0$ defined by $\rot u_0$
 at the section $\{\theta_2=0\}$ has the form~\eqref{pipi}; this is
  a twist map on the annulus $A(a,b)$ of class $C^{k-1}$. It is nondegenerate (i.e. $W'(\rho)\neq 0$)
  in view  of the twist condition in the definition of nondegenerate shear Euler flows, and  preserves the area-form $g(\rho)d\theta_1\wedge d\rho$. If $\epsilon\ll1$, then for
  any $v_0 \in B_\epsilon(u_0;k)$, the field $\rot v_0$ also defines a Poincar\'e return map
  \begin{equation} \label{kk}
  \cP_\epsilon^{v_0}: A(a,b) \to A(0,1),
\end{equation}
 belonging to $B_{C\epsilon}(\Pi_0;C^{k-1})$, where $C^{k-1}$ stands for the space of $C^{k-1}$-smooth maps $ A(a,b) \to A(0,1)$. Here and in what follows, $C$ is a constant that may vary from line to line and  does not depend on $\epsilon$.  Note  that  $ \cP_\epsilon^{u_0}=\Pi_0$.
 Note also  that all maps in $B_{C\epsilon}(\Pi_0;C^{k-1})$ are diffeomorphisms on the image if $\epsilon\ll1$, and that for each
 $v_0\in  B_\epsilon(u_0;k)$ the map $\cP_\epsilon^{v_0}$ has an invariant two-form $\mu_2^{v_0}$,  $\epsilon$-close to the
 form $g(\rho)d\theta_1\wedge d\rho$.

Now, let us pick up
a point $c \in (a,b)$ such that $W(c)=2\pi p/q$ for two coprime natural numbers $p,q$. Then any point
 $(\theta_1,c)$ is periodic under  iterations of $\Pi_0$ with period $q$. It corresponds to a periodic orbit
 of $\rot u_0$ in $\mathbb S^3$ that is a $(p,q)$-torus knot over the unknot (the orbits turns $q$ times in the $\theta_2$ direction and $p$ times in the $\theta_1$ direction). The following lemma is instrumental in the proof of Proposition~\ref{prop:lambda}. We recall that an
  elliptic fixed point  of a map  $A(a,b) \to A(0,1)$
  is called {\it
KAM-stable }if it is accumulated by a positive measure set of invariant continuous quasi-periodic curves. We say that an elliptic fixed point
of an area-preserving map $A(a,b) \to A(0,1)$ is {\it nondegenerate} if its eigenvalue $\lambda$ (and its complex conjugate) avoids the resonances $1,2,3,4$ (that is $\lambda,\lambda^2,\lambda^3$ and $\lambda^4$ are not equal to $1$), and the first Birkhoff constant of this fixed point is not zero.

\begin{lemma}\label{L:map}
Let $c\in(a,b)$ be as above and  $\mu_2$ be any $C^{k-1}$-area form on  $\mathbb S^1 \times (0,1)$.
Consider  the set of
$C^{k-1}$ exact diffeomorphisms  $\Pi_\epsilon:  A(a,b) \to A(0,1)
$, preserving  $\mu_2$ and belonging to
$
B_{C\epsilon} (\Pi_0;C^{k-1})
$.
If $\epsilon\ll1$, then  for a generic $\Pi_\epsilon$ the map $\Pi_\epsilon^q$ has a nondegenerate elliptic fixed point which is
KAM-stable and is  $\epsilon$-close to the resonant curve $\mathbb S^1\times \{c\}$.
\end{lemma}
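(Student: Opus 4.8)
The plan is to reduce the statement to the local analysis, near the resonant circle $\Gamma:=\mathbb S^1\times\{c\}$, of a $C^{k-1}$-small $\mu_2$-preserving perturbation of the integrable twist map $\Pi_0$, and then to invoke the KAM stability theorem of Appendix~\ref{app3}. Set $F_\epsilon:=\Pi_\epsilon^q$ and write $\rho=c+s$. Since $u_0$ is a nondegenerate shear flow, $W'(c)\neq0$, so $\Pi_0$ — and hence $\Pi_\epsilon$ for $\epsilon\ll1$ — is a monotone twist map in a fixed neighbourhood of $\Gamma$; moreover $F_0=\Pi_0^q$ is the time-one map of $H_0(\theta_1,s)=\tfrac12\,qW'(c)\,s^2+O(s^3)$, so it fixes $\Gamma$ pointwise and has a nontrivial transverse shear there. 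By Moser's lemma we may assume, after a $C^{k-1}$ change of coordinates near $\Gamma$ (applicable uniformly over the $C^{k-1}$-bounded families of area forms $\mu_2$ that occur in the application), that $\mu_2=d\theta_1\wedge ds$; then $F_\epsilon$ is an exact, standard-area-preserving $C^{k-1}$ perturbation of $F_0$ of size $O(\epsilon)$.

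First I would produce the periodic orbit. Writing $\Pi_\epsilon=\Pi_0+\epsilon P_\epsilon$ with $P_\epsilon=O(1)$ in $C^{k-1}$, a standard resonant-averaging procedure near $\Gamma$ — each normalization step costs a bounded number of derivatives, harmless since $k>10$ — conjugates $F_\epsilon$, in a neighbourhood of $\Gamma$, to a small perturbation of the time-one map of the pendulum-type Hamiltonian $\tfrac12\,qW'(c)\,s^2+\epsilon V_\epsilon(\theta_1)$, where $V_\epsilon$ is the normalized first-order resonant term built from $P_\epsilon$ along $\Gamma$ (alternatively, one may produce the orbits as critical points of the action of the exact twist map $F_\epsilon$). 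For a generic $\Pi_\epsilon$ the function $V_\epsilon$ is a Morse function on $\mathbb S^1$: this condition is open, and it is dense because $V_\epsilon$ depends continuously on $\Pi_\epsilon$ and can be put in general position by an arbitrarily small $\mu_2$-preserving perturbation of $\Pi_\epsilon$. At each critical point $\theta_1^{*}$ of $V_\epsilon$ the fixed-point equation $F_\epsilon(z)-z=0$ has invertible Jacobian — its determinant equals $-qW'(c)\,\epsilon\,V_\epsilon''(\theta_1^{*})+O(\epsilon^2)\neq0$ — so by the implicit function theorem $F_\epsilon$ has a genuine nondegenerate fixed point $z_\epsilon$ at distance $O(\epsilon)$ from $(\theta_1^{*},0)\in\Gamma$; the corresponding $\Pi_\epsilon$-orbit has period exactly $q$ (since $\gcd(p,q)=1$) and traces the $(p,q)$-torus knot over the unknot.

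Next I would verify ellipticity, nondegeneracy and KAM stability. As $\det DF_\epsilon(z_\epsilon)=1$ and $\operatorname{tr} DF_\epsilon(z_\epsilon)=2+\tau_\epsilon$ with $\tau_\epsilon=qW'(c)\,\epsilon\,V_\epsilon''(\theta_1^{*})+o(\epsilon)$, the fixed point located at a critical point of $V_\epsilon$ with $\operatorname{sgn}V_\epsilon''(\theta_1^{*})=-\operatorname{sgn}W'(c)$ is elliptic, with eigenvalue $e^{\pm i\alpha_\epsilon}$ and $\alpha_\epsilon=\sqrt{|\tau_\epsilon|}\,(1+o(1))=O(\sqrt\epsilon)$; since $V_\epsilon$ is Morse on $\mathbb S^1$ it has both a maximum and a minimum, so such an elliptic $z_\epsilon$ always exists. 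Because $\alpha_\epsilon\in(0,\pi/2)$ once $\epsilon$ is small, the eigenvalue automatically avoids the resonances $1,2,3,4$. It then remains to make the first Birkhoff constant $\beta_1(z_\epsilon)$ nonzero: $\beta_1$ depends continuously on $\Pi_\epsilon$ — through the $4$-jet of $F_\epsilon$ at $z_\epsilon$, hence through the $4$-jets of $\Pi_\epsilon$ along the orbit — so $\{\beta_1\neq0\}$ is open, and it is dense because a small $\mu_2$-preserving perturbation of $\Pi_\epsilon$ supported near one orbit point realizes an essentially arbitrary small change of that $4$-jet, on which $\beta_1$ depends nontrivially. Intersecting the two open dense conditions, for a generic $\Pi_\epsilon$ the map $F_\epsilon=\Pi_\epsilon^q$ has a nondegenerate elliptic fixed point $z_\epsilon$ that is $\epsilon$-close to $\Gamma=\mathbb S^1\times\{c\}$, and the KAM stability theorem of Appendix~\ref{app3} then shows that $z_\epsilon$ is accumulated by a positive-measure set of invariant continuous quasi-periodic curves, i.e.\ it is KAM-stable. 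This is the assertion of the lemma.

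The hard part is the passage from the formal pendulum normal form to a genuine, non-formal, nondegenerate elliptic orbit: one must control the mismatched $\epsilon$-scalings (the orbit lies $O(\epsilon)$ from $\Gamma$, yet its rotation number is only $O(\sqrt\epsilon)$), keep track of the derivative losses in the averaging and in the Birkhoff normalization so that enough regularity survives for the KAM theorem of Appendix~\ref{app3} (this is precisely where the bound $k>10$ enters), and — the most delicate genericity point — verify that $\mu_2$-preserving perturbations of $\Pi_\epsilon$ localized near a single orbit point suffice both to put $V_\epsilon$ in general position and, independently, to force $\beta_1(z_\epsilon)\neq0$, all while remaining inside $B_{C\epsilon}(\Pi_0;C^{k-1})$.
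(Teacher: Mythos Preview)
Your argument is essentially correct, but it takes a considerably more laborious route than the paper's. The paper dispatches the lemma in three lines: (i) the Poincar\'e--Birkhoff theorem for exact area-preserving twist maps gives, for a generic $\Pi_\epsilon$, an elliptic $q$-periodic point $\epsilon$-close to the resonant circle $\mathbb S^1\times\{c\}$; (ii) Robinson's genericity theorem~\cite[Theorem~9]{Ro70} guarantees that, generically, this elliptic point is nondegenerate (its eigenvalue avoids the low-order resonances and the first Birkhoff constant is nonzero); (iii) Appendix~\ref{app3} then yields KAM stability. No normal form, no pendulum picture, and no derivative bookkeeping enter at all --- the resonant averaging you construct is replaced wholesale by Poincar\'e--Birkhoff, and your hand-argument for the genericity of ``$V_\epsilon$ Morse'' and ``$\beta_1\ne0$'' is replaced wholesale by Robinson's theorem.

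Your approach does work, and its payoff is that it is constructive: one sees explicitly that the elliptic point sits at a critical point of the subharmonic Melnikov potential $V_\epsilon$ and that its rotation number scales like $\sqrt\epsilon$. Two minor corrections. First, your sign for ellipticity is reversed: from $\operatorname{tr}DF_\epsilon(z_\epsilon)=2+\tau_\epsilon$ one has $\tau_\epsilon=-qW'(c)\,\epsilon\,V_\epsilon''(\theta_1^{*})+o(\epsilon)$, so the elliptic point corresponds to $W'(c)V_\epsilon''(\theta_1^{*})>0$; this is harmless, since a Morse function on $\mathbb S^1$ has critical points of both signs. Second, your claim that ``this is precisely where $k>10$ enters'' misplaces the regularity constraint: the paper's proof of this lemma needs only enough smoothness to invoke Appendix~\ref{app3} (so $k-1\ge4$), and the bound $k>10$ in Theorem~\ref{T:main} comes from the finite-regularity Vey theorem used in Theorem~\ref{L:main}, not from the present lemma.
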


\begin{proof}
We first note that for a generic area-preserving perturbation $\Pi_\epsilon$ that is an exact diffeomorphism (i.e. it satisfies the intersection property), the map $\Pi_\epsilon^q$ has a fixed point that is elliptic (actually, at least $q$ fixed points) and $\epsilon$-close to the resonant curve $\mathbb S^1\times \{c\}$, as a consequence of the Poincar\'e-Birkhoff theorem~\cite[pp. 220-222]{GH}. Now a result of Robinson~\cite[Theorem 9]{Ro70} shows that for a typical $\Pi_\epsilon$ the elliptic fixed point is nondegenerate. We can then apply Moser's stability theorem  (see Appendix~\ref{app3}) to conclude that this
elliptic fixed point of $\Pi_\epsilon^q$ is KAM-stable.  It then gives rise to a KAM-stable elliptic $q$-periodic point for the Poincar\'e
map $\Pi_\epsilon$.
\end{proof}

To prove Proposition~\ref{prop:lambda} we must show that there exists an open and dense set of $v_0 \in B_\epsilon(u_0;k)$ satisfying the desired properties. Once density is established, the openness of the set easily follows from the fact that a nondegenerate KAM-stable elliptic fixed point is robust under $C^k$-small area-preserving perturbations (c.f. Lemma~\ref{L:map}).

To prove the density, the crucial idea is the suspension construction presented in Appendix~\ref{app:suspen}. Indeed, take a vector field $v_1\in B_\epsilon(u_0;k)$ and the Poincar\'e return map $\cP^{v_1}_\epsilon$ of $\rot v_1$. As discussed above, $\cP^{v_1}_\epsilon\in B_{C\epsilon}(\Pi_0;C^{k-1})$ and it preserves an area form $\mu_2^{v_1}$. Moreover, $\cP^{v_1}_\epsilon$ is exact because any divergence-free vector field on $\mathbb S^3$ is exact. For any $\epsilon'>0$, Lemma~\ref{L:map} implies that there exists a map $\Pi\in B_{\epsilon'}(\cP^{v_1}_\epsilon;C^{k-1})$ that preserves $\mu_2^{v_1}$ whose $q$-iterate has a nondegenerate elliptic fixed point which is KAM-stable. Deforming $\Pi$ with an appropriate $\epsilon'$-small homotopy which is the identity in a neighborhood of the resonant curve $\mathbb S^1\times\{c\}$, we can safely assume that $\Pi=\cP^{v_1}_\epsilon$ in the complement of a neighborhood of this curve.

Applying Theorem~\ref{teo:susp} in Appendix~\ref{app:suspen} to the map $\Pi$ we obtain a $C^{k-1}$ divergence-free field $w_0$ in $A\times \mathbb S^1$ whose Poincar\'e map at the section $\{\theta_2=0\}$ is precisely $\Pi$. The fact that $w_0$ has the same regularity as $\Pi$ is a consequence of~\cite[Section~5.2]{Treschev} and the property that the group of exact $C^{p}$ area-preserving diffeomorphisms of the annulus (connected with the identity) is locally connected by $C^{p+1}$ paths provided that $p>1$ is not an integer (see e.g.~\cite{Do82}). Since $\Pi=\cP^{v_1}_\epsilon$ in the complement of a neighborhood of the resonant curve, we also have that $w_0=\rot v_1$ in the complement of a neighborhood of the resonant torus $\mathbb S^1\times\{c\}\times \mathbb S^1$, thus defining a global field on $\mathbb S^3$ that we still denote by $w_0$. Moreover, these fields are close in the sense that $\|w_0-\rot v_1\|_{k-1}<C\epsilon'$, so $w_0$ is $C\epsilon$ close to $\rot u_0$.

Notice that the field $w_0$ has a KAM-stable elliptic periodic orbit $\gamma_\epsilon$. The curve $\gamma_\epsilon$ is a $(p,q)$-torus knot because $w_0$ and $\rot u_0$ are $C^{k-1}$-close and, by construction, this periodic orbit bifurcates as $\epsilon\to 0$ from a degenerate periodic orbit of $\rot u_0$ which is a $(p,q)$-torus knot as well. Being KAM-stable, it is surrounded by ergodic invariant tori of class $C^{k-1}$ which jointly occupy in $\STR$ a set of positive Lebesgue measure. These tori are the boundaries of tubular neighbourhoods of $\gamma_\epsilon$, which is a non-trivial knot. So we conclude that they are not isotopic to the standard (unknotted) torus in $\mathbb S^3$.

Finally, the simply connectedness of $\mathbb S^3$ implies that $w_0$ is the vorticity of a unique divergence-free vector field $v_0$ of class $C^k$,
i.e. $w_0=\rot v_0$,  satisfying the estimate
\[
\|v_0-v_1\|_{k}=\|\rot^{-1}(w_0-\rot v_1)\|_{k}<C\epsilon'\,,
\]
since the linear operator $\rot^{-1}:\SVect^{k-1}(\mathbb S^3)\to \SVect^k(\mathbb S^3)$ is continuous, see Lemma~2.3 in \cite{KKP14}
(recall that $k>10$ is not an integer). The vector field $v_0$ satisfies the conditions in the statement of Proposition~\ref{prop:lambda} and we are done.
\end{proof}

\noindent {\bf Step 2:} To complete the proof of Theorem~\ref{T:main}, fix $\epsilon>0$ and take an isotopy torus of  class $a$ with
the corresponding $\lambda(\epsilon)>0$ as in Proposition~\ref{prop:lambda}.
Since $\vk_a(\rot v_0)\geq \lambda>0$, we have that $\vk_0(\rot v_0)\leq |\mathbb S^3|-\lambda=2\pi^2-\lambda$.
This means that the set of ergodic invariant tori of $\rot v_0$ that are isotopic to the standard torus in $\mathbb S^3$
is not of full measure. It follows that
\begin{equation}\label{E:inv}
\vk_0 (\rot \mathcal S_t(v_0))\leq 2 \pi^2-\lambda
\end{equation}
for all $t$ for which the Euler flow is defined, since $\vk_0$ is preserved by the flow, cf. Theorem~\ref{t3}.

Now, take any stationary Euler flow $\widetilde u_0\in B_\eta(u_0;k)$, $k>10$ non-integer, for $\eta>0$ as described in Theorem~\ref{L:main}. The vorticity $\rot \widetilde u_0$ of any such flow is integrable and nondegenerate and it has
a full measure of standard invariant tori:
$$
\vk_0(\rot \widetilde u_0)=2\pi^2\,.
$$


By Theorem 4.7 from~\cite{KKP14} if $\| \tilde u_0 -v_1\|_{C^k} = \delta_1$ for a non-integer $k>10$, then
\begin{equation}\label{E:estim}
\vk_0(\rot v_1)\ge 2\pi^2 -C\delta_1^2\,.
\end{equation}
The regularity $k>10$ is necessary because we have to apply Herman's theorem~\cite{He83} to the map $\hat\Pi$ in Equation~\eqref{map_Vey} whose twist term $\hat W(\rho)$ is of class $C^{[(k-4)/2]}$. Therefore, $[\frac{k-4}{2}]>3$ if and only if $k>10$.

From the estimates~\eqref{E:estim} and~\eqref{E:inv} we conclude that
$$
\|\mathcal S_t(v_0)-\widetilde u_0\|_{k}>C\delta_1=:\delta
$$
for all $t$ for which the Euler flow is defined. This completes the proof.

\begin{remark}\label{Rem:last}
For the proof of Theorem~\ref{T:main0}   we directly apply~\cite[Theorem~4.7]{KKP14} to the evolution $\mathcal S_t(v_0)$, where $v_0\in B_\epsilon(u_0;k)$ is the generic field in Remark~\ref{Rem:tor}. Note that the set $\mathbb T^3_\tau$ where $u_0$ is nondegenerate has full measure as $\tau\to 0$, which is enough to apply the aforementioned result from~\cite{KKP14}. In fact, since for Theorem~\ref{T:main0} we do not perturb the steady state $u_0$, we do not need to analyze the critical set of the Bernoulli function $\mathcal B$ and hence we do not use Vey's theorem from Appendix~\ref{app4} (which is the main source for the lost of regularity in the proof of Theorem~\ref{T:main}), so the regularity for which Theorem~\ref{T:main0} holds is just $k>5$.
\end{remark}

\section*{Acknowledgements}
We are  indebted to H.~Eliasson, D.~Treschev and D.~Turaev for fruitful discussions.
B.K. was partially supported by an NSERC research grant. B.K. also thanks the ICMAT in Madrid for kind hospitality during his visit.
S.K. was supported by the grant 18-11-00032 of the Russian Science Foundation.
D.P.-S. was supported by the grants MTM2016-76702-P (MINECO/FEDER) and Europa Excelencia EUR2019-103821 (MCIU), and partially supported by the ICMAT--Severo Ochoa grant SEV-2015-0554.

\appendix

\section{The Bernoulli formulation of the stationary Euler equation}\label{app1}

In Euclidean space, it is standard (see e.g.~\cite[Chapter II.1]{AKh}) that the stationary Euler equation
$\nabla_u u =-\nabla p$ can be written in the following equivalent way:
\begin{equation} \label{eq:euler}
u\times \omega=\nabla \mathcal B\,, \;\; \diver u=0\,,
\end{equation}
where $\mathcal B:= p+\frac12|u|^2$ is the Bernoulli function. The same formulation holds for steady fluid flows on a general Riemannian $3$-manifold $(M,g)$. This fact is well known to experts, but difficult to find in the literature, so let us provide a proof.

First, we recall the definition of vector product in a Riemannian $3$-manifold. If $X$ and $Y$ are two vector fields on $M$, its vector product $X\times Y$ is the unique vector field defined as
$$
i_{X\times Y}g=i_Yi_X\mu\,,
$$
where $\mu$ is the volume form on $M$, and $i_Zg$ denotes the 1-form $Z^\flat$ dual to the vector field $Z$ by means of
 the metric $g$.

The vorticity $\omega=\rot u$ of a field $u$ is defined as the only vector field on $M$  satisfying
$$
i_\omega \mu=d\alpha\,,
$$
where the 1-form $\alpha:=u^\flat$ is metric-dual to the vector field  $u$.

The first observation to prove Equation~\eqref{eq:euler} is that the 1-form $\left(\nabla_u u\right)^\flat$ dual to the covariant derivative $\nabla_u u$ is~\cite[Chapter IV.1.D]{AKh}:
$$
\left(\nabla_u u\right)^\flat=L_u\alpha - \frac12 d(\alpha(u))\,.
$$
Using Cartan's formula for the Lie derivative, this differential form can also be written as
$$
i_ud\alpha+\frac12 d(\alpha(u))=i_ui_\omega\mu +\frac12 d(\alpha(u))\,,
$$
where we have used the definition of the vorticity to write the second expression. Accordingly, the stationary Euler equation $\nabla_u u=-\nabla p$ reads in terms of differential forms as:
$$
i_ui_\omega\mu +\frac12 d(\alpha(u))=-dp\,\,\,\,\,\Leftrightarrow\,\,\,\,\,i_\omega i_u\mu = d(p+\frac12 \alpha(u))\,.
$$
Using the definition of the vector product on manifolds, and setting $\mathcal B:=p+\frac12\alpha(u)$ as the Bernoulli function, the (dual) vector formulation of the stationary equation reads
$$
u\times \omega=\nabla \mathcal B\,,
$$
as required.

\section{Some computations for the shear steady states}\label{App2}
In this section we shall use the notations introduced in Section~\ref{S.3} without further mention. The dual 1-forms (using the Euclidean metric) of the vector fields $u_1$ and $u_2$ are
\begin{equation*}
\alpha_1=-ydx+xdy+\xi dz -zd\xi \qquad \alpha_2=-ydx+xdy-\xi dz +zd\xi\,.
\end{equation*}
It is easy to check that in terms of the coordinates $(\theta_1,\theta_2,\rho)$ on $\mathbb S^3$, these forms read as
\begin{equation*}
\alpha_1=\rho d\theta_1-(1-\rho)d\theta_2\qquad \alpha_2=\rho d\theta_1+(1-\rho)d\theta_2\,,
\end{equation*}
and their exterior derivatives are given by
\begin{equation*}
d\alpha_1=-d\theta_1\wedge d\rho-d\theta_2\wedge d\rho \qquad d\alpha_2=-d\theta_1\wedge d\rho+d\theta_2\wedge d\rho\,.
\end{equation*}
Recalling that the volume form in these coordinates is $\mu=\frac12 d\theta_1\wedge d\theta_2\wedge d\rho$, these expressions and the fact that $i_{\rot u_i}\mu = d\alpha_i$ imply that $\rot u_1=-2u_1$ and $\rot u_2=2u_2$.

Now we are ready to compute the rot of the vector field $u=f_1(\rho)u_1+f_2(\rho)u_2$. Its dual 1-form and exterior derivative are
\begin{align*}
\alpha&=\rho(f_1+f_2)d\theta_1+(1-\rho)(-f_1+f_2)d\theta_2\,,\\
d\alpha&=\Big(-\rho f'_1-\rho f'_2-f_1-f_2\Big)d\theta_1\wedge d\rho \\& +\Big((1-\rho)f'_1-(1-\rho)f'_2-f_1+f_2\Big)d\theta_2\wedge d\rho\,.
\end{align*}
Since $\rot u$ satisfies that $i_{\rot u}\mu=d\alpha$, a straightforward computation yields
\begin{align*}
\rot u&=\Big((2\rho-1)(f'_2-f'_1)+2(f_2-f_1)+f'_1-f'_2\Big)\partial_{\theta_1}\\&+\Big(2\rho(f'_1+f'_2)+2(f_1+f_2)\Big)\partial_{\theta_2}\,,
\end{align*}
which is equal to the expression given in Section~\ref{S.3}.

To compute the Bernoulli function $\mathcal B$, we simply use the fact (see Appendix~\ref{app1}) that
$$
d\mathcal B=-i_ud\alpha\,,
$$
which, after a few computations, gives
$$
d\mathcal B=\Big(f_1f_1+f_2f_2'+4f_1f_2+(2\rho-1)(f_1f_2'+f_2f_1')\Big)d\rho\,.
$$
Integrating this expression, we obtain that $\mathcal B$ is a function of $\rho$  given by the formula \eqref{E:Ber}.

\section{KAM stability of generic elliptic points}\label{app3}

The goal of this appendix is to show that a generic elliptic point (in the sense of Lemma~\ref{L:map}) of a $C^4$ area-preserving diffeomorphism of a disk is accumulated by continuous invariant curves with dense orbits, jointly forming a set of positive measure.
This fact was stated  by J.~Moser~\cite[Theorem 2.12]{Moser} with an idea of the proof given, its full implementation is apparently new.
We learned the implementation of this idea, presented below, from D.~Turaev, to whom we are grateful for the explanation.

Let $\Pi$ be a $C^4$ area-preserving diffeomorphism of a disk with an elliptic fixed point at the origin.
 Using the complex notation $z\in \mathbb C$, it can be written in the form
\begin{equation*}\label{map1}
\Pi(z)=e^{i\omega}z+H(z,\overline z)\,, \qquad z\in D_{r_0}\,,
\end{equation*}
where $H$ is a complex-valued function of class $C^4$, $\omega\in\mathbb R$ is the rotation number of the elliptic point $z=0$, and $D_{r_0}:=\{z: |z|<r_0\}$. We assume that the
map is non-resonant in the sense that
\begin{equation}\label{cond}
\text{
$ e^{i\omega k}  \ne1\ $ for $\ k=1,2,3,4$\,. }
\end{equation}
Then, Birkhoff's normal form theorem~\cite[Theorem 2.12]{Moser} ensures that if $r_0$ is small enough,
there exists an analytic area-preserving change of coordinates taking $\Pi$ to the form
\begin{equation*}
\Pi(z)=e^{i(\omega+\alpha|z|^2)}z+\widetilde H(z,\overline z)\,,
\end{equation*}
where $\widetilde H$ is a $C^4$ function such that $\partial_{z,\overline z}^\beta \widetilde H(0,0)=0$ for $|\beta|\leq 3$, which implies that
\begin{equation}\label{hi}
\text{
$|\partial_{z,\overline z}^\beta \widetilde H|\leq C|z|^{4-|\beta|}\ $ in $\ D_{r_0},\ $
 if $\ |\beta|\leq 4$\,. }
\end{equation}
 We assume
 that the first Birkhoff constant  $\alpha $ does not vanish; this is a  generic condition.

Passing to the polar coordinates $z=re^{i\varphi}$, we re-write $\Pi$ as
\begin{equation*}
\Pi_r(r,\varphi)=r+f(r,\varphi)\,, \qquad \Pi_{\varphi}(r,\varphi)=\varphi+\omega+\alpha r^2+ {g(r,\varphi)} \,,
\end{equation*}
where $f$ and $g$ are $C^4$ functions in the punctured disk $ D_{r_0}\backslash \{0\}$. Rescaling  the radius as $r=\epsilon R$, $\epsilon\ll1$,
we  write $\Pi$ in the form
\begin{equation}\label{map2}
\Pi_R(R,\varphi)=R+F(R,\varphi)\,, \quad \Pi_{\varphi}(R,\varphi)=\varphi+\omega+\alpha \epsilon^2 R^2+G(R,\varphi)\,,
\end{equation}
where
$$
(R, \varphi) \in
\mathbb A:=\{1<R<2, \varphi\in [0,2\pi] \}\,,
$$
and $ F(R,\varphi)= \epsilon^{-1} f(\epsilon R, \varphi)$, $ G(R, \varphi) = g(\epsilon R, \varphi)$.

Now we will estimate the functions $G$ and $F$.
To estimate $G$, we define the auxiliary function
$$
\Phi(R, \varphi) := {\widetilde H(z,\overline z)}{z^{-1}} e^{-i(\omega+\alpha|z|^2)}\;,\text{ with} \; z= \epsilon Re^{i\varphi}\,,
$$
so that $G(R,\varphi)=\text{Im}\ln\Big(1+\Phi(R,  \varphi )\Big)$. If $(R, \varphi) \in \mathbb A$, then $\epsilon<r<2\epsilon$.
So by~\eqref{hi}  with  $\beta=0$, for small $\epsilon$ we have that
$|\Phi| <1/2$,  and  accordingly $\ln\big(1+\Phi(R,\varphi) \big)$ is a $C^4$ function of $(R, \varphi)$ in $\mathbb A$. Noticing that
$\p^k/\p R^k = \epsilon^k\p^k/\p r^k$, we derive from \eqref{hi} the estimates
\begin{equation}\label{new_hi}
\Big| \frac{\p^k}{\p R^k} {\widetilde H(R, \varphi)}    \Big| =\epsilon^k \Big| \frac{\p^k}{\p r^k} {\widetilde H}    \Big| \le C\epsilon^4,\quad
\Big| \frac{\p^k}{\p \varphi^k} {\widetilde H(R, \varphi)}    \Big| \le C\epsilon^4\,,
\end{equation}
for $0\le k\le4$ and $(R,\varphi)\in \mathbb A$. It then follows that
 $
 \partial^\beta_{R, \varphi}  \ln\big(1+\Phi(R, \varphi) \big) =O(\epsilon^3)$ for $ (R,\varphi) \in \mathbb A$ and $ |\beta|\le4$.
 Therefore
$\|G\|_{C^{4}(\mathbb A)}\leq C\epsilon^3\,.$

To estimate $F$ we consider the function
$
(\Pi_R)^2= R^2 +2RF +F^2 =: R^2+ J(R, \varphi).
$
Then $F= (\Pi_R^2)^{1/2} -R = R\big( \sqrt{1+R^{-2} J(R,\varphi})-1\big) $.  Since
$\epsilon^2\Pi_R^2(R,\varphi)  = |\Pi(z)|^2$ with $z= \epsilon Re^{i\varphi}$, then
$$
J(R,\varphi) = 2\epsilon^{-1} \text{Re} \big( e^{-i(\omega +\epsilon^2 \alpha R^2) }R e^{-i\varphi}  {\widetilde H(R, \varphi)}  \big) +\epsilon^{-2}
 {\widetilde H}  \overline {\widetilde H}(R, \varphi)\,,
$$
and in view of the estimates~\eqref{new_hi},  $\|J\|_{C^{4}(\mathbb A)}<C\epsilon^3$. We then conclude that $\|F \|_{C^{4}(\mathbb A)}<C\epsilon^3$.

To study the invariant curves  of the map~\eqref{map2}, we consider its $N$-th iterates with $N\le \epsilon^{-2}$. Writing the map  $\Pi^N$ as
\begin{equation*}
\Pi^N_R(R,\varphi)=R+F_N(R,\varphi)\,, \quad \Pi^N_{\varphi}(R,\varphi)=\varphi+\omega_N+\alpha N\epsilon^2 R^2+G_N(R,\varphi)\,,
\end{equation*}
where $\omega_N:=N\omega\, (\text{mod }2\pi)$,
we check by induction that  $F_N$ and $G_N$ are bounded as $\|F_N\|_{C^4(\mathbb A)}<C\epsilon$ and $\|G_N\|_{C^{4}(\mathbb A)}<C\epsilon,$ for
$1\le N \le  \epsilon^{-2}$.

Denote by $N_\epsilon $
the integer part of  $ \epsilon^{-2}$. Then $ \alpha N_\epsilon \epsilon^2 R^2 = \alpha R^2 +O(\epsilon^2)$.
Next let us choose  a sequence $\epsilon_j\to 0$
such that $\omega_{N_j}\to \Omega \,  (\text{mod }2\pi)$, where $N_j$ stands for $N_{\epsilon_j}$. We set
$\
\delta_j = \max( |\omega_{N_j} - \Omega|, \epsilon_j),
$
and consider the twist-map
$$
\widetilde \Pi(R,\varphi)=(R,\varphi+\Omega+ \alpha R^2), \qquad \alpha\ne0.
$$
Then
$\
\|\Pi^{N_j}-\widetilde \Pi\|_{C^{4}(\mathbb A)}<C\delta_j\,.
$
Now Herman's twist theorem~\cite{He83} implies that for $j$ large enough the area--preserving map
 $\Pi^{N_j}$ of  the annulus $\mathbb A$
  has a positive measure set of continuous
   quasi-periodic invariant curves with Diophantine rotation numbers.

These curves also are invariant for the map $\Pi$. This follows from the uniqueness of Diophantine invariant curves in Herman's twist theorem~\cite[Section 5.10]{He83} (if $\gamma$ is a Diophantine invariant curve of $\Pi^{N_j}$, since $\Pi(\gamma)$ is also an invariant curve of $\Pi^{N_j}$ with the same rotation number, then $\Pi(\gamma)=\gamma$). However let us provide a self-contained proof of this fact for the sake of completeness. Indeed, let $\gamma$ be one of the curves, invariant for
 $\Pi^{N_j}$, and let $\gamma'$ be this curve, written in the $z$-variable. It is
  invariant for the original diffeomorphism $\Pi(z)$.  To prove this, denote by $D'$ the domain bounded by $\gamma'$. Since the map $\Pi(z)$
   is area-preserving, then the boundary of $\Pi(D')$
 intersects $\gamma'$, i.e. $\Pi (\gamma')$ intersects $\gamma'$, and $\Pi (\gamma)$ intersects $\gamma$.
  Take any $p\in \gamma$ such that $\Pi(p)\in \gamma$.
  Considering its  orbit under the mapping $\Pi^{N_j}$,
  $\Gamma:=\cup_{m\in \mathbb Z}\Pi^{mN_j}(p)\subset \gamma$, we observe that
   $\Pi(\Gamma)\subset \gamma$. Indeed, denoting by $\gamma_m:=\Pi^{mN_j}(p)$ and $\gamma_m':=\Pi(\gamma_m)$ , we have that $\gamma_0'=\Pi(p)\in\gamma$, $\gamma_1'=\Pi^{N_j+1}(p)=\Pi^{N_j}(\gamma_0')\in\gamma$, etc.
     Since $\Gamma$ is dense in $\gamma$ (it is a quasi-periodic trajectory of the map $\Pi^{N_j}$),
  we conclude that $\Pi(\gamma)\subset \gamma$, and in fact $\Pi(\gamma)=\gamma$ because $\Pi$ is a diffeomorphism, thus showing that $\gamma$ is an invariant curve of $\Pi$. Clearly an orbit of each point in $\gamma$ is dense in $\gamma$.

Summarising, we conclude that the
map $\Pi(z)$  has a positive measure set of continuous
  invariant curves with dense orbits; written in the complex Birkhoff's coordinate $z$ they lie
 in the annulus $\{\epsilon_j<|z|<2\epsilon_j\}$.
 Since the above argument can be repeated for infinitely many values of the small parameter $\epsilon_j\to0$, we deduce that :\\
 {\it If
 the area-preserving map $\Pi(z)$ satisfies~\eqref{cond} and its first Birkhoff constant $\alpha\ne0$, then
 $\Pi$ exhibits a positive measure set of continuous invariant curves with dense trajectories,
  contained in a sequence of disjoint annuli accumulating at the origin, so the KAM-stability of the elliptic fixed point follows.}

\section{The Vey theorem in $\R^2$}\label{app4}

Consider the plane $\R^2=\{ x=(x_1,x_2)\}$, endowed with the standard area-form $\om_0= dx_1\wedge dx_2$.
 By $D_\rho$ we denote the disc $\{|x|<\rho\},\ \rho>0$, and we define the action variable $I:=r^2/2$, where $(r,\phi)$ denote the polar coordinates in the plane.

\begin{theorem}\label{t_Vey}
Let $H\in C^k(D_\rho), \  k\ge4$, such that $H(0)=0$, $dH(0)=0$ and $d^2 H(0) >0$. Then there exists a $C^{k-3}$ area-preserving diffeomorphism
$\Psi^+: D_\rho \to D_{\rho'}$, $\Psi^+(0) =0$, and a $C^{[k/2]-1}$ function $h$, $h(0)=0$, $h'(0)\ne0$, such that
$H(x) = h\big( I(\Psi^+(x))\big)$.
\end{theorem}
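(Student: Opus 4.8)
The plan is to prove the finite-regularity Vey theorem in two stages: first reduce $H$ to a function of $I$ alone by a symplectic change of coordinates that flattens the level sets of $H$ to circles, then handle the loss of regularity that is forced by the singularity at the origin. I would begin by noting that since $d^2H(0)>0$, the Morse lemma (in its $C^{k-2}$-for-$C^k$ parametric form) produces $C^{k-2}$ coordinates in which $H$ becomes a positive-definite quadratic form, and after a linear symplectic normalization we may assume $H = \tfrac12(y_1^2+y_2^2) + O(|y|^3) = I + O(I^{3/2})$ near the origin, where the error is $C^{k-2}$. The level sets $\{H = c\}$ for small $c>0$ are then smooth closed curves encircling the origin, each bounding a disc; let $A(c)$ be the area enclosed. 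Then $A$ is a $C^{k-1}$ function of $c$ on $(0,\delta)$ with $A(0)=0$ and $A'(c)>0$ (since $H$ has no critical points other than the origin on this disc), and one computes $A(c) = 2\pi c + O(c^{3/2})$.

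The key construction is to use $A$ as the new action: define $h := (2\pi)^{-1}A$ wherever this makes sense, so that $h(H(x))$ equals $I$ composed with the sought diffeomorphism precisely when $\Psi^+$ sends the level set $\{H=c\}$ to the circle of enclosed area $2\pi h(c)$, i.e. radius $\sqrt{2h(c)}$. To build $\Psi^+$ with $\Psi^{+*}\omega_0 = \omega_0$ realizing this, I would run the classical Arnold--Liouville-type argument for a single pair of functions: on the complement of the origin, $H$ and any angle function $\vartheta$ conjugate to it give coordinates, and Moser's path method (deforming $\omega_0$ along the straight-line family of area forms with the prescribed total areas on each level set) yields the symplectomorphism; regularity is tracked through the homotopy formula. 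Alternatively, one constructs the angle variable directly by flowing along the Hamiltonian vector field of $H$ (reparametrized so the period is $2\pi$) and checks the symplectic normalization of the area. Either way, away from $0$ the map $\Psi^+$ is as smooth as $H$ permits, namely $C^{k-3}$ (one derivative is lost in passing from $\omega$ to a primitive, as in Moser's lemma; combined with the $C^{k-2}$ from Morse this gives $C^{k-3}$), and one verifies it extends across $0$ with $\Psi^+(0)=0$ and remains $C^{k-3}$ there because near the origin everything is a small perturbation of the identity in rescaled variables.

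The regularity count for $h$ is where the genuinely new finite-regularity phenomenon appears, and this is the step I expect to be the main obstacle. The function $A(c)$ is $C^{k-1}$ in $c$, hence $h$ is $C^{k-1}$ in $c$; but the theorem only claims $h \in C^{[k/2]-1}$. The loss comes because the natural object is not $h$ as a function of the value $c$ but $\hat W$-type data, i.e. $h$ must be compatible with the square-root relationship between the radius and the action near a Morse critical point: writing $H$ in Birkhoff-like form $H = I + a I^2 + \dots$ one sees that $h$ inherits only half the regularity in the radial variable through the substitution $I = r^2/2$, since odd-order terms in $r$ must cancel and the even part, re-expressed in $I$, picks up at most $[k/2]$ derivatives. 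Concretely, I would prove the sharp statement by expanding $H$ in the Morse coordinates, showing the odd part in $r$ can be removed by an additional symplectic change of variables (this is the step consuming regularity down to $[k/2]$), and then $h$ is read off as the inverse of $c\mapsto$ (enclosed-area)$/2\pi$ on the resulting radially-symmetric normal form. I would also need to check $h(0)=0$ and $h'(0) = A'(0)/2\pi = 1 \neq 0$, which follows from $A(c)=2\pi c + o(c)$. The final assembly — verifying $H = h(I\circ\Psi^+)$ on the punctured disc and then on all of $D_\rho$ by continuity — is then routine.
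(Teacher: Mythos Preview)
Your approach via the enclosed-area function $A(c)$ is a legitimate alternative to the paper's, but it diverges substantially from what the paper does and has a genuine gap in the regularity accounting.

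The paper follows Eliasson's scheme rather than the classical action-angle construction. After the Morse lemma (giving $H=\tfrac12|\Psi(x)|^2$ with $\Psi\in C^{k-1}$), it does \emph{not} compute enclosed areas. Instead it pulls back $\omega_0$ by $\Psi^{-1}$, angularly averages the resulting form, and runs a Moser-type flow to reduce to the case where the pulled-back form is rotation-invariant; this already produces a diffeomorphism $\bar\Psi$ commuting with rotations, so $I\circ\bar\Psi$ is a function of $I$ alone. The key regularity device is then Whitney's theorem on even functions: one has $\tilde h(r^2)=f(r)$ with $f\in C^{k-2}$ even, whence $\tilde h\in C^{[k/2]-1}$. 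A second Moser flow (Step~3) corrects $\bar\Psi$ to be area-preserving while remaining tangent to the circle foliation, costing one more derivative and yielding $\Psi^+\in C^{k-3}$.

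Your proposal has a real gap at the point you yourself flag as the main obstacle. You assert that $A(c)$ is $C^{k-1}$ and hence so is $h$, and then try to explain why the theorem nonetheless only claims $C^{[k/2]-1}$; but the premise is wrong. Away from $c=0$ the area is as smooth as you like, but at the origin one has, in Morse coordinates with Jacobian $J\in C^{k-2}$,
\[
A(c)=\int_0^{\sqrt{2c}}\!\Big(\int_0^{2\pi} J(r\cos\theta,r\sin\theta)\,d\theta\Big) r\,dr,
\]
and the angular integral is an even $C^{k-2}$ function of $r$, so Whitney's theorem gives $A'(c)=G(2c)$ with $G\in C^{[k/2]-1}$; in general $A$ is only $C^{[k/2]}$ at $c=0$, not $C^{k-1}$. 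Your attempted explanation (``removing odd parts in $r$ by a symplectic change'') is not a substitute for this computation, and once the premise is corrected there is nothing left to explain: the $[k/2]-1$ is exactly the Whitney loss. Separately, your construction of $\Psi^+$ by Moser's method or the Hamiltonian-flow angle variable is only sketched on the punctured disc; the extension across $0$ with $C^{k-3}$ regularity is where the paper spends most of its effort (the averaging and the tangency of the correcting vector field to circles), and you would need to supply an argument of comparable care there.
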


An analytic version of this result is due to Vey and is well known~\cite{Vey}.
We were not able to find in the literature a finite-smoothness
version of this theorem and instead give below its proof. It is based on ideas of Eliasson's work~\cite{El}, where a much
more complicated multi-dimensional version of this result is established without explicit control on how the smoothness
of $\Psi^+$ depends on $k$. More precisely, we follow the interpretation of Eliasson's proof, given in~\cite[pp. 10-15]{KP}, skipping the
infinite-dimensional technicalities. Our notation mostly agree with that of~\cite{KP}, and we refer there for missing details.

\begin{proof}Multiplying $H$ by a positive constant and making a linear area-preserving change of coordinates we
 can safely assume that $D^2H(0) = \tfrac12$Id. Accordingly, all changes of variables we are performing below are of the form $x\mapsto x+O(x^2)$.
 By ``a germ" we mean ``a germ at the origin" of a function, or of a vector-field, etc.
\\

\noindent{\it Step 1} (Morse lemma). Applying Morse lemma we find a germ of diffeomorphism\,\footnote{The
 smoothness of  $\Psi$  follows from a more general result in \cite{Tak}.} of class $C^{k-1}$
  $\Psi$  such that
 $$
 i) \qquad H(x) = \tfrac12 |\Psi(x)|^2, \qquad \Psi (x) = x+ O(x^2)\,.   \qquad  \qquad  \qquad  \qquad  \qquad
 $$
 We denote by $G$ the germ of $\Psi^{-1}$ and set
 $$
 \om_1 :=G^* \om_0, \; \om_\Delta := \om_1 - \om_0,\; \al_0:= \tfrac12( x_1dx_2 - x_2dx_1),\; \al_1 := G^*\al_0, \; \al_\Delta := \al_1-\al_0\,.
 $$
Then $d\al_j = \om_j$ and $d\al_\Delta = \om_\Delta$. Write $\al_\Delta $ as $W(x) dx$; then $ W= O(x^2)$. Clearly all introduced objects
are $C^{k-2}$-smooth.

We denote by $(\cdot, \cdot)$ the standard scalar product in $\R^2$, and by $\lan\cdot, \cdot\ran$ the usual pairing between 1-forms and vector fields, and
write 2-forms in $\R^2$ as $(J(x) dx, dx)$, where $J(x)$ is an antisymmetric operator in $\R^2$ (i.e., a $2\times 2$ antisymmetric matrix), and
 $
 (J(x) dx, dx)(\xi,\eta) = (J(x) \xi, \eta)
 $
 for any two vector fields $\xi,\eta$. Then $\om_0 = (J_0 dx, dx)$, where $J_0(x_1,x_2) = (-x_2/2, x_1/2)$,
  and $\om_1(x) = (\bar J(x) dx, dx)$ with
 $\bar J(0) = J_0$.
\\

 \noindent{\it Step 2} (Averaging in angle). Denote by $\Phi_\theta, \ \theta\in\R$, the operator of rotation by angle $\theta$,
 $\Phi_\theta(r,\phi) := (r, \phi+\theta)$. Then $\Phi^*_\theta \al_0 =\al_0$,  $\Phi^*_\theta \om_0 =\om_0$ and
 $$
 \Phi^*_\theta  (V(x)dx) = \Phi_{-\theta}  V(\Phi_\theta x) dx, \quad  \Phi^*_\theta  (J(x)dx, dx) =  (\Phi_{-\theta}  J(\Phi_\theta x)\Phi_\theta dx, dx).
 $$
 It is easy to see that $\Phi_{-\theta}  J(\Phi_\theta x)\Phi_\theta=J(\Phi_\theta x)$. Now consider the averaging operator $M$, where for a function $f$,  $Mf(x) :=\intt f(\Phi_tx)\,dt$, while for a form $\beta$,
 $M\beta (x) :=\intt  (\Phi_t^*\beta)(x)\,dt$. Then
 $$
 M(J(x) dx, dx) =  (MJ(x) dx, dx),\qquad  MJ(x) =\intt \Phi_{-\theta} J(\Phi_\theta x) \Phi_\theta\,d\theta\,.
 $$
 Accordingly, the form $M\om_1$ is $(M\bar J(x) dx, dx)$. Writing the  operator $\bar J(x)$ as
 $$
  \bar J(x) = J_0 +(x, \nabla_x \bar J(0)) + J_2(x), \qquad J_2 =O(x^2)\,,
 $$
 we easily see that $M\bar J = J_0 + MJ_2$, where $C^{k-2}\ni MJ_2(x) =O(x^2)$. Now consider the linear homotopy of $J_0$ and  $M\bar J$:
 $$
 (M\bar J)^\tau := (1-\tau)J_0 +\tau M\bar J  = J_0 + \tau MJ_2\,,
 $$
with $\tau\in [0,1]$, and set $\hat J^\tau(x) := - \big ((M\bar J)^\tau(x)\big)^{-1}= -\big( J_0 +\tau M J_2(x)\big)^{-1}$. This is a germ of a
operator-valued map (an antisymmetric $2\times 2$ matrix) of class $C^{k-2}$.

 Let us return to the $1$-form $\al_\Delta =W(x) dx$, consider $MW(x)$, which is given by
 $$
 MW(x)=\intt \Phi_{-\theta}W(\Phi_\theta x)\,d\theta\,,
 $$
 and define the $C^{k-2}$ non-autonomous vector-field
 $
 V^\tau(x) := \hat J^\tau(x) (MW)(x)$; obviously  $V^\tau = O(x^2)\,.
 $
Consider the differential equation
$$
\dot x(\tau) = V^\tau(x(\tau))\,,
$$
and denote by $\vp_\tau$, $0\le \tau\le1$, the germ of its flow-map. Then $\vp_0=$id, $\vp_\tau(x) = x+ O(x^2)$ and all germs $\vp_\tau$
commute with rotations because the vector field $V^\tau$ does. By direct calculation, using Cartan's formula and that $d (M\alpha_\Delta)=M\omega_1-\omega_0$, we verify that
$
\varphi_\tau^* \hat\om^\tau= \const,$ where  $ \hat\om^\tau = \big( (M\bar J(x))^\tau dx, dx\big).
$
Therefore
\begin{equation}\label{d3}
\vp_1^* \hat\om^1 = \vp_1^* M\om_1 =
 \hat\om^0 = \om_0\,.
\end{equation}
Now let us set $ \bar\Psi :=\vp_1^{-1} \circ \Psi$. This is a germ of a $C^{k-2}$ diffeomorphism, satisfying $ \bar\Psi (x) = x+O(x^2)$. Since
$\vp_1^{-1}$ commutes with rotations, then $2I( \vp_1^{-1}(y)) = \tilde h(2I(y))$. To see which kind of function $\tilde h$ is, let us denote
$2I= r^2$ and take $y=(r,0)$. Then $2I(y) = r^2$, so $ \tilde h(r^2) = 2I( \vp_1^{-1}(r,0)) =: f(r)$. The function $f$ is of class $C^{k-2}$ and
even, so by Whitney's theorem~\cite{Whi} $\tilde h\in C^{[k/2] -1}$. Since $\vp_1^{-1} (r,0) = (r,0) +O(r^2)$, then $\tilde h(2I) = 2I +o(I)$,
and $\tilde h'(0) =1$. So relation $i)$ implies that
$$
 i') \qquad H(x) = h(I(\bar \Psi(x)), \qquad h(0)=0,\ h'(0) =1, \ h\in  C^{[k/2] -1} \,. \qquad  \qquad  \qquad
 $$

Now we re-denote $\bar\Psi$ to $\Psi$. We have arrived at the same situation as in Step 1, but with $\Psi\in C^{k-2}$, \ $i)$ is
 replaced by $i')$ and, in view of \eqref{d3},
 \begin{equation}\label{d33}
M\om_1 = \om_0\,,
\end{equation}
where we recall that $\omega_1=(\Psi^{-1})^*\omega_0$.
\\

\noindent{\it Step 3} (End of the proof). By~~\eqref{d33}, $dM\al_\Delta = M d\al_\Delta = M( \om_1 -\om_0) =0$. So $M\al_\Delta = dg$, $g(0)=0$,
 where $g$ is a $C^{k-1}$-germ. Since $dMg = Mdg = M\al_\Delta$, then by replacing $g$ with $Mg$ we achieve that $dg =M\al_\Delta$ and $Mg=g$.
 Accordingly, $\nabla g(0)=0$ and $g(x) = O(x^2)$. Denote by $\chi$ the vector field of rotations $-x_2 \partial_{x_1} + x_1\partial_{x_2}$. Since $g$ is
 rotationally invariant, then $\lan dg, \chi\ran =0$. So
  \begin{equation}\label{d2}
M\lan \al_\Delta, \chi\ran = \lan M\al_\Delta, \chi\ran= \lan dg, \chi\ran =0\,.
\end{equation}
We set $T(x) = \lan \al_\Delta, \chi\ran$.  This is the germ of a $C^{k-2}$-function, satisfying $MT=0$. Since $\al_\Delta = O(x^2)$ and
$\chi = O(x)$, then $T=O(x^3)$. Let us consider the differential equation for a germ of a function $f$:
\begin{equation}\label{d4}
\chi(f) = T(x).
\end{equation}
Since $MT=0$, it is easy to solve it in polar coordinates for a germ $f$, satisfying $Mf=0$:
$
f(r,\phi) = \intt t \, T(r, \phi+t)\,dt.
$
In Cartesian coordinates the solution $f$ reads as
$$
f(x) = \intt t \, T(\Phi_t(x))\,dt\,.
$$
Similar to $T$, $f\in C^{k-2}$ and $f=O(x^3)$.

Recalling that $(\Psi^*)^{-1} \om_0 =: \om_1 = (\bar J(x) dx, dx)$, where $ \bar J\in C^{k-2}$ with $\bar J(0) = J_0$, we interpolate $J_0$ and $\bar J$
by setting
$
 \bar J^\tau := (1-\tau) J_0 + \tau \bar J,
$
and define  $ J^\tau (x) := -(\bar J^\tau (x))^{-1}$. Then $\bar J^\tau$ and $J^\tau$ are germs of antisymmetric operators of class $C^{k-2}$.
Denote
$$
\om^\tau := (1-\tau) \om_0 + \tau \om_1 = ( \bar J^\tau (x) dx, dx)\,,
$$
and set
$
V^\tau(x) := J^\tau (x) (W(x) -\nabla f(x)).
$
Then $C^{k-3} \ni V^\tau = O(x^2)$. Consider the ODE
 \begin{equation}\label{d5}
\dot x = V^\tau (x), \quad 0\le\tau\le1\,,
\end{equation}
and denote by $\vp_\tau$ the germ of its flow-maps. Then $\vp_\tau(x) = x+O(x^2)$, and another simple calculation shows that
$
\vp_\tau^* \om^\tau = \const.
$
So $\vp_1^* \om_1 = \om_0$. That is, the germ of the diffeomorphism
$$
 \Psi^+ := \vp_1^{-1} \circ \Psi \in C^{k-3}, \qquad \Psi^+(x) = x+O(x^2),
$$
satisfies $(\Psi^+)^*\om_0 =\om_0$, i.e. $\Psi^+$ is an area-preserving diffeomorphism.

Finally notice that
 \begin{equation*}
 \begin{split}
\om^\tau(V^\tau(x), J_0x) = ( \bar J^\tau(x) V^\tau(x), J_0x) = -(W(x) -\nabla f(x), J_0x) \\
= - \frac12\lan \al_\Delta, \chi\ran + \frac12\lan df, \chi\ran =0
\end{split}
\end{equation*}
by Equation~\eqref{d4} (where $T=  \lan \al_\Delta, \chi\ran $). Since $\om^\tau(V^\tau, V^\tau)=0$, then $V^\tau(x) \parallel J_0x$, i.e. the vector field $V^\tau$ is tangent to the foliation defined by $\chi$. Therefore, the
solutions of Equation~\eqref{d5} satisfy
$
 (d/ d\tau) |x(\tau)|^2 = 2 (V^\tau(x), x) =0.
$
That is $ |\vp_\tau(x)|^2= |x|^2$ for all $\tau$. Then, $I(\vp_\tau(x)) =I(x)$, so the germ $\Psi^+$ still satisfies $i')$, which completes the proof of the theorem.
\end{proof}

The theorem above implies Vey's theorem for local area preserving diffeomorphims of class $C^k$ in $\R^2$:

\begin{corollary}\label{C:Vey}
Let $\Pi$ be a $C^k$ area preserving
diffeomorphism of $D_{\rho_1}$ onto its image, $\Pi(0)=0$.
Assume that it admits a first integral $H\in C^k(D_{\rho_1})$ (i.e. $H\circ \Pi=H$) such that $H(0)=0$, $dH(0)=0$ and $d^2 H(0) >0$.
Then there exists $\rho_0>0$ and an area preserving
 $C^{k-3}$-smooth change of variables
$\Psi: D_{\rho_0} \to \Psi ( D_{\rho_0} ) \subset  D_{\rho_1} $,
such that
$\Psi(0)=0$ and the transformed diffeomorphism $\hat\Pi = \Psi^{-1}\circ\Pi\circ\Psi$ in polar coordinates reads as
$
\hat\Pi (r,\phi) =(r, \phi+ \hat W(r^2)),
$
for some  $C^{[(k-3)/2]}$ function $\hat W$.
\end{corollary}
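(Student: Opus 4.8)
The plan is to derive Corollary~\ref{C:Vey} from Theorem~\ref{t_Vey} by conjugating $\Pi$ with the normalizing map produced by the Vey theorem and then exploiting area preservation. First I would apply Theorem~\ref{t_Vey} to the first integral $H$ (shrinking $\rho_1$ if necessary so that $H\in C^k(D_{\rho_1})$, $H(0)=0$, $dH(0)=0$, $d^2H(0)>0$). This produces a $C^{k-3}$ area-preserving diffeomorphism $\Psi^+\colon D_{\rho_1}\to D_{\rho''}$ with $\Psi^+(0)=0$, and a function $h\in C^{[k/2]-1}$ with $h(0)=0$, $h'(0)\neq0$, such that $H(x)=h\big(I(\Psi^+(x))\big)$. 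I then set $\Psi:=(\Psi^+)^{-1}$, which is again a $C^{k-3}$ area-preserving diffeomorphism fixing the origin; restricted to a sufficiently small disc $D_{\rho_0}$ (small enough that $\Pi\circ(\Psi^+)^{-1}(D_{\rho_0})$ stays inside $D_{\rho_1}$, and small enough for the monotonicity argument below), this is the change of variables claimed in the statement, with image inside $D_{\rho_1}$.

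Next, consider the conjugated map $\hat\Pi:=\Psi^{-1}\circ\Pi\circ\Psi=\Psi^+\circ\Pi\circ(\Psi^+)^{-1}$. As a composition of area-preserving maps of regularity $C^{k-3}$ and $C^k$ it is itself a $C^{k-3}$ area-preserving diffeomorphism onto its image, with $\hat\Pi(0)=0$. Since $H$ is a first integral of $\Pi$, the function $\tilde H:=H\circ\Psi=h\circ I$ is a first integral of $\hat\Pi$: indeed $\tilde H\circ\hat\Pi=H\circ\Pi\circ\Psi=H\circ\Psi=\tilde H$. Because $h'(0)\neq0$, $h$ is strictly monotone near $0$, so on $D_{\rho_0}$ the level sets of $\tilde H$ are exactly the circles $\{|y|=\mathrm{const}\}$, and invariance of $\tilde H$ forces $|\hat\Pi(y)|=|y|$; that is, $\hat\Pi$ preserves every circle centered at the origin. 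Writing $\hat\Pi$ in polar coordinates $(r,\phi)$ we therefore get $\hat\Pi(r,\phi)=(r,\Theta(r,\phi))$, and pulling back the area form $\om_0=r\,dr\wedge d\phi$ gives $\hat\Pi^*\om_0=r\,\partial_\phi\Theta\,dr\wedge d\phi$; area preservation forces $\partial_\phi\Theta\equiv1$, so $\Theta(r,\phi)=\phi+\beta(r)$ for a single function $\beta$ of $r$. Hence $\hat\Pi$ acts on each circle as a rigid rotation.

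It remains to identify the regularity of the twist and to express it through $r^2$. Set $\hat W(t):=\beta(\sqrt t)$, so that in complex notation $\hat\Pi(z)=z\,e^{i\hat W(|z|^2)}$; the subtlety is that $\hat\Pi$ is only known to be $C^{k-3}$ in the \emph{Cartesian} variables. Here I would argue that $\hat\Pi(z)\,\bar z=|z|^2e^{i\hat W(|z|^2)}$ is a $C^{k-3}$ function of $z$ depending only on $|z|^2$; restricting it to a diameter makes it a $C^{k-3}$ \emph{even} function of one variable, and Whitney's theorem on differentiable even functions~\cite{Whi} (used exactly as in the proof of Theorem~\ref{t_Vey}) shows that $t\mapsto t\,e^{i\hat W(t)}$ is of class $C^{[(k-3)/2]}$. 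Since this function vanishes to first order at $t=0$ with derivative $e^{i\hat W(0)}\neq0$ there, and since the constraint $|t\,e^{i\hat W(t)}|=t$ pins down its argument, one recovers $e^{i\hat W(t)}$, and hence $\hat W$, as a $C^{[(k-3)/2]}$ function near $0$. Feeding this back into $\hat\Pi(r,\phi)=(r,\phi+\hat W(r^2))$ finishes the proof.

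The step I expect to be the main obstacle is precisely the last one — the sharp bookkeeping of how much Cartesian regularity of $\hat\Pi$ survives when one passes to the radial twist $\hat W$ viewed as a function of $t=r^2$. A naive "divide by $z$" or "divide by $t$" estimate loses one derivative, so to reach the stated count $C^{[(k-3)/2]}$ one must use the extra rigidity coming from rotation invariance: the $(k-3)$-jet of $\hat\Pi$ at the origin is built only from the equivariant monomials $z\,|z|^{2b}$, which forces $e^{i\hat W(t)}$ to admit a Taylor expansion in $t$ of the corresponding order, and this, combined with the Whitney argument above, gives the sharp regularity. Everything else — the application of Theorem~\ref{t_Vey}, the conjugation, and the reduction to rotations via area preservation — is routine.
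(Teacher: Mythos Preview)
Your argument is correct and follows the same route as the paper: apply Theorem~\ref{t_Vey}, conjugate $\Pi$ by the resulting area-preserving change of variables, use the first integral to see that $\hat\Pi$ preserves every circle, use area preservation to see that the angular twist depends only on $r$, and finally invoke Whitney's theorem on even functions to pass from $r$ to $r^2$. The only difference is in the last step: the paper applies Whitney directly to the twist $V(r)$ itself (extended to an even function of $r\in\mathbb R$), asserting that $V\in C^{k-3}$ and hence $\hat W\in C^{[(k-3)/2]}$, whereas you work with $\hat\Pi(z)\bar z$ and then have to divide by $t$. The paper's route is shorter and sidesteps the division you flagged as the main obstacle; your more cautious bookkeeping is not wrong, but it is unnecessary once one observes that the restriction of $\hat\Pi$ to a diameter already gives the even $C^{k-3}$ function $r\mapsto V(r)$ directly.
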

\begin{proof}
By Theorem~\ref{t_Vey} there exists an area preserving local $C^{k-3}$--diffeomorphism $\Psi$ such that $\Psi(0)=0$ and
$H\circ\Psi = \hat H(x)$, where
$\hat H(x) =h(x_1^2+ x_2^2)$ for some  $C^{[k/2]-1}$ function $h$. Since $H$ is a first integral of $\Pi$, then the transformed map
$\hat\Pi = \Psi^{-1}\circ\Pi\circ\Psi$ in polar coordinates reads
 as $\hat\Pi(r,\phi)=(r,\phi+V(r,\phi))$. The fact that $\hat\Pi$ preserves the standard area form $rdr\wedge d\phi$ implies that $V(r,\phi)\equiv V(r)$.
 Since $V$ is a $C^{k-3}$ even function, then Whitney's theorem  ensures that $V(r)=\hat W(r^2)$ for some $C^{[(k-3)/2]}$ function
 $\hat W$, thus proving the desired result.
\end{proof}
\begin{remark}
We note that, obviously, the function $\hat W(t)$ in Corollary~\ref{C:Vey} is of class $C^{k-3}$ for $t>0$.
\end{remark}

\section{Suspension of area-preserving diffeomorphisms}\label{app:suspen}
Let $A(a,b):=\mathbb S^1\times (a,b)$ be an annular domain with $0<a<b<1$, and consider the toroidal manifold $M:=A(0,1)\times \mathbb S^1$. We can endow it with coordinates $(\theta_1,\rho,\theta_2)\in \mathbb S^1\times (0,1)\times \mathbb S^1$ and with the canonical volume form $d\theta_1\wedge d\rho\wedge d\theta_2$. Assume that $w$ is a $C^k$ divergence-free vector field on $M$ that is transverse to the section $\{\theta_2=0\}$. Its first return map at this section defines a $C^k$ diffeomorphism (onto its image) $\cP^w:A(a,b)\to A(0,1)$ that preserves an area form $\mu_2$. The following suspension result is due to D. Treschev~\cite{Treschev}:

\begin{theorem}\label{teo:susp}
Let $\Pi: A(a,b)\to A(0,1)$ be a $C^k$ map that preserves the area $\mu_2$. We assume that $\Pi$ is $C^k$-close to $\cP^w$, i.e. $\|\Pi-\cP^w\|_{C^k(A(a,b))}<\delta$, and that $\Pi=\cP^w$ in a neighborhood of $\partial A(a,b)$. Then if $\delta$ is sufficiently small,
 there exists a $C^k$ divergence-free vector field $\hat w$ on $M$ transverse to the section $\{\theta_2=0\}$ whose Poincar\'e map is $\cP^{\hat w}=\Pi$, $\delta$-close to $w$, that is $\|w-\hat w\|_{C^{k}(M)}<C\delta$, and such that $\hat w=w$ in a neighborhood of $\partial A(a,b)\times\mathbb S^1$.
\end{theorem}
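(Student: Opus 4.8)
The plan is to encode the passage from $\cP^w$ to $\Pi$ as an isotopy through $\mu_2$-preserving diffeomorphisms supported away from $\partial A(a,b)$, and then to ``thicken'' this isotopy into a divergence-free perturbation of $w$ using the flow box that $w$ itself supplies near the section $\Sigma:=\{\theta_2=0\}\cong A(0,1)$. This is essentially the construction of \cite[Section~5.2]{Treschev}; I indicate the conceptual skeleton and the one place where regularity must be watched.

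\emph{Step 1: an isotopy between the two return maps.} Put $\Psi:=(\cP^w)^{-1}\circ\Pi$, a $C^k$ diffeomorphism onto its image. It preserves $\mu_2$ (both $\Pi$ and $\cP^w$ do, the latter because it is the first-return map of the divergence-free field $w$ and hence preserves the flux form $\mu_2$), it equals the identity near $\partial A(a,b)$ by hypothesis, and $\|\Psi-\mathrm{id}\|_{C^k}\le C\delta$. Such a map is exact area-preserving, and being $C^k$-close to the identity, the local connectedness of the group of exact $C^k$ area-preserving diffeomorphisms of the annulus by $C^{k+1}$ paths \cite{Do82} produces an isotopy $\{\Psi_s\}_{s\in[0,1]}$ through exact $\mu_2$-preserving diffeomorphisms, with $\Psi_0=\mathrm{id}$, $\Psi_1=\Psi$, every $\Psi_s$ equal to the identity near $\partial A(a,b)$, depending smoothly on $s$, and generated by a time-dependent field $Z_s$ on $A(a,b)$ that is $\mu_2$-divergence-free for each $s$, vanishes near $\partial A(a,b)$, and obeys $\|Z_s\|_{C^k}\le C\delta$ uniformly. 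The point that $Z_s$ retains the full $C^k$ regularity (with controlled norm) is exactly the content of this Douady-type result, and is the crux of the whole argument (see the remark below).

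\emph{Step 2: suspension via the flow box of $w$.} Transversality of $w$ to $\Sigma$ lets us identify a neighbourhood $\mathcal U\subset M$ of the union of return orbit-segments issued from $\overline{A(a,b)}\subset\Sigma$ with the mapping cylinder $\{(x,t):x\in A(a,b),\ 0\le t\le T(x)\}$, top $t=T(x)$ glued to bottom by $(x,T(x))\sim(\cP^w(x),0)$, where $T$ is the $C^k$ first-return time; in these coordinates $w=\partial_t$, and, $w$ being volume-preserving with flux form $\mu_2$ on $\Sigma$, Liouville's theorem carries the Riemannian volume to $\mu_2\wedge dt$. Pick a smooth non-decreasing $\sigma\colon[0,\infty)\to[0,1]$ with $\sigma\equiv0$ near $0$ and $\sigma\equiv1$ on $[\tfrac23 T_{\min},\infty)$, where $T_{\min}:=\min_{\overline{A(a,b)}}T>0$, so that $\sigma'$ is supported in the interior of every fibre $[0,T(x)]$, and set on $\mathcal U$
\[
\hat w:=\partial_t+\sigma'(t)\,Z_{\sigma(t)}\, ,
\]
with $Z_{\sigma(t)}$ regarded as tangent to the slices $\{t=\mathrm{const}\}$, and $\hat w:=w$ on $M\setminus\mathcal U$. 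The two prescriptions agree near $\partial\mathcal U$ because $\sigma'$ vanishes near the gluing loci and $Z_s$ vanishes near $\partial A(a,b)$; hence $\hat w$ is a globally defined $C^k$ field, equal to $w$ near $\partial A(a,b)\times\mathbb S^1$ and to $\partial_t$ near $\Sigma$ (so transverse to $\Sigma$). It is divergence-free: $\diver\hat w=\partial_t(1)+\sigma'(t)\,\diver_{\mu_2}Z_{\sigma(t)}=0$, since each $Z_s$ is $\mu_2$-divergence-free and the volume is $\mu_2\wedge dt$. Its holonomy from bottom to top of the cylinder is the time-one flow generated by $\sigma'(t)Z_{\sigma(t)}$, namely $\Psi_1=\Psi$; composing with the gluing map $\cP^w$ gives $\cP^{\hat w}=\cP^w\circ\Psi=\Pi$. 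Finally $\|\hat w-w\|_{C^k(M)}=\|\sigma'(t)Z_{\sigma(t)}\|_{C^k}\le C\sup_s\|Z_s\|_{C^k}\le C\delta$, which is the claimed closeness.

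\emph{The main difficulty.} Everything except Step 1 is bookkeeping; the genuinely delicate point is to obtain the isotopy $\{\Psi_s\}$ \emph{without losing a derivative}. A naive interpolation between $\mathrm{id}$ and $\Psi$ (for instance linearly scaling a generating/stream function) yields a path whose infinitesimal generator is only $C^{k-1}$, which would make $\hat w$ of class $C^{k-1}$ rather than $C^k$. Preventing this loss — producing a path of exact $\mu_2$-preserving $C^k$ diffeomorphisms whose generators $Z_s$ keep the full $C^k$ regularity of $\Psi$ with norms controlled by $\|\Psi-\mathrm{id}\|_{C^k}$ — is precisely what the local $C^{k+1}$-path-connectedness of the group of exact $C^k$ area-preserving diffeomorphisms of the annulus \cite{Do82} provides, and the careful implementation of the flow-box construction with matched boundary behaviour is the subject of \cite[Section~5.2]{Treschev}. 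Granting these two inputs, the construction above completes the proof.
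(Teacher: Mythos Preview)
Your sketch is correct and matches the approach the paper itself invokes: the paper does not supply a proof of this theorem but attributes it to Treschev, and in the body of Section~\ref{S:5} explains that the suspension retains the full $C^k$ regularity precisely by combining Treschev's flow-box construction with Douady's local $C^{k+1}$-path-connectedness of the exact $C^k$ area-preserving diffeomorphism group --- exactly your Steps~2 and~1. Your identification of the regularity of the isotopy generator $Z_s$ as the only nontrivial point is on the mark; the rest (that the volume becomes $\mu_2\wedge dt$ in the flow box, that the support conditions on $\sigma'$ and $Z_s$ make the gluing smooth, and that the holonomy computes to $\cP^w\circ\Psi=\Pi$) is indeed routine.
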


\end{document}